\newcommand{\ignore}[1]{}
\newtheorem{theorem}{Theorem}[section]
\newtheorem{lemma}[theorem]{Lemma}
\newtheorem{remark}[theorem]{Remark}
\newtheorem{proposition}[theorem]{Proposition}
\newtheorem{assumption}[theorem]{Assumption}
\newtheorem{definition}[theorem]{Definition}
\newcommand{\removelatexerror}{\let\@latex@error\@gobble}
\let\bbordermatrix\bordermatrix
\patchcmd{\bbordermatrix}{8.75}{4.75}{}{}
\patchcmd{\bbordermatrix}{\left(}{\left[}{}{}
    \patchcmd{\bbordermatrix}{\right)}{\right]}{}{}
\newcommand{\argmin}{\operatorname{argmin}}
\newcommand{\real}{\mathbb{R}}
\newcommand{\realextended}{\overline{\real}}
\newcommand{\naturalnumbers}{\mathbb{N}}
\newcommand{\norm}[1]{\ensuremath{\| #1 \|}}
\newcommand{\until}[1]{[#1]}
\newcommand{\map}[3]{#1:#2 \rightarrow #3}
\newcommand{\setdef}[2]{\{#1 \; | \; #2\}}
\newcommand{\setdefb}[2]{\Bigl\{#1 \; \Big| \; #2\Bigr\}}
\newcommand{\MM}{\mathcal{M}}
\newcommand{\abs}[1]{\ensuremath{\left\lvert{#1}\right\rvert}}
\newcommand{\cl}{\operatorname{cl}}
\newcommand{\lm}{\lambda}
\newcommand{\setr}[1]{\{#1\}}
\newcommand{\xo}{x^\star}
\newcommand{\lmo}{\lm^\star}
\newcommand{\topt}{t^\star}
\newcommand{\yt}{\tilde{y}}
\newcommand{\st}{\operatorname{s.} \operatorname{t.}} 
\newcommand{\Eb}{\mathbb{E}}
\newcommand{\Pb}{\mathbb{P}}
\newcommand{\Rb}{\mathbb{R}}
\newcommand{\BB}{\mathcal{B}}
\newcommand{\HH}{\mathcal{H}}
\newcommand{\PP}{\mathcal{P}}
\renewcommand{\SS}{\mathcal{S}}
\newcommand{\CVaR}[1]{\operatorname{CVaR}^{#1}}
\newcommand{\Pbhat}{\widehat{\Pb}}
\newcommand{\data}{\widehat{\xi}}
\newcommand{\Xdcp}{\widehat{X}_{\mathtt{DCP}}}
\newcommand{\Xcdcpin}{\widehat{X}^{\mathtt{in}}_{\mathtt{CDCP}}}
\newcommand{\Xcdcp}{\widehat{X}_{\mathtt{CDCP}}}
\newcommand{\Xsa}[1]{\widehat{X}_{\mathtt{SA},#1}}
\newcommand{\Xscp}[1]{\widehat{X}_{\mathtt{SCP},#1}}
\newcommand{\oprocendsymbol}{\hbox{$\bullet$}}
\newcommand{\oprocend}{\relax\ifmmode\else\unskip\hfill\fi\oprocendsymbol}
\newcommand{\longthmtitle}[1]{\mbox{}\textup{\textsl{(#1):}}}
\definecolor{new}{rgb}{0.55,0,0.55}
\newcommand{\thickhline}{%
  \noalign {\ifnum 0=`}\fi \hrule height 1pt
  \futurelet \reserved@a \@xhline
}
\newcolumntype{"}{@{\hskip\tabcolsep\vrule width 1pt\hskip\tabcolsep}}
\title{Data-Driven Chance Constrained Optimization \\ under Wasserstein Ambiguity Sets}
\author{Ashish R. Hota, Ashish Cherukuri and John Lygeros   
\thanks{The authors are with the Automatic Control Laboratory, ETH Z\"{u}rich, Switzerland. Email: {\{ahota,cashish,lygeros\}@control.ee.ethz.ch}.}}
\begin{document}

\maketitle
\thispagestyle{empty}
\pagestyle{empty}

\begin{abstract}
We present a data-driven approach for distributionally robust chance constrained optimization problems (DRCCPs). We consider the case where the decision maker has access to a finite number of samples or realizations of the uncertainty. The chance constraint is then required to hold for all distributions that are close to the empirical distribution constructed from the samples (where the distance between two distributions is defined via the Wasserstein metric). We first reformulate DRCCPs under data-driven Wasserstein ambiguity sets and a general class of constraint functions. When the feasibility set of the chance constraint program is replaced by its convex inner approximation, we present a convex reformulation of the program and show its tractability when the constraint function is affine in both the decision variable and the uncertainty. For constraint functions concave in the uncertainty, we show that a cutting-surface algorithm converges to an approximate solution of the convex inner approximation of DRCCPs. Finally, for constraint functions convex in the uncertainty, we compare the feasibility set with other sample-based approaches for chance constrained programs.
\end{abstract}

\section{Introduction}

Numerous engineering applications encounter optimization problems with constraints dependent on uncertain parameters. Solution methodologies for such optimization problems fall broadly into two categories. In {\it robust optimization}, the aim is to take a decision that is feasible for all realizations of the uncertainty \cite{ben2009robust}. This approach often yields conservative solutions with regard to the optimal value and requires the support of the uncertainty to be bounded and known to the decision maker. In contrast, a {\it chance constrained program} (CCP) has soft probabilistic constraints on the decision variable in place of the hard ones present in a robust optimization \cite{shapiro2009lectures}; specifically, the aim is to compute a solution that satisfies the constraint with high probability. CCPs are increasingly used in many applications, such as stochastic model predictive control \cite{farina2016stochastic,schildbach2014scenario}, robotics \cite{blackmore2011chance,vitus2016stochastic}, energy systems \cite{vrakopoulou2017chance,guo2018data} and autonomous driving \cite{carvalho2015automated}.

In order to solve a CCP, the decision maker needs to know the probability distribution of uncertain parameters. In practice, this information is often unavailable and instead, the decision maker has access to data about the uncertainty in the form of samples. {\it Scenario}~\cite{calafiore2005uncertain,campi2008betterbound,calafiore2010random} and {\it sample approximation}~\cite{luedtke2008sample} approaches use this data to compute an approximate solution of the CCP. In the scenario approach, the constraint involving uncertainty is required to hold for every available sample, while in the sample and discard scenario approach \cite{campi2011samplediscard} and the sample approximation approach \cite{luedtke2008sample}, it is required to hold for a large fraction of samples. Their main advantage is that if the samples are drawn from a true underlying distribution and the number of samples is sufficiently large, the solutions are feasible for the original CCP with high probability. However, in practice, samples may be few and not be drawn from the true distribution. In such settings, it is desirable to find a solution that satisfies the chance constraint for all distributions that belong to a suitably defined family of distributions, or a so-called {\it ambiguity set}. This class of problems is known as {\it distributionally robust chance constrained programs} (DRCCPs) and is the focus of this paper.

In distributionally robust stochastic optimization (DRSO) in general and DRCCPs in particular, the ambiguity set is defined either as a set of probability distributions that satisfy certain moment constraints \cite{delage2010distributionally,zymler2013joint,hanasusanto2017ambiguous} or that are close under an appropriate distance function, such as the Prokhorov metric \cite{erdougan2006ambiguous} or $\phi$-divergence \cite{jiang2016data-driven}. DRCCPs with moment based ambiguity sets were recently considered for designing controllers for stochastic systems \cite{van2016distributionally} and to solve optimal power flow problems with uncertain renewable energy generation \cite{zhang2017distributionally}. Recent work in DRSO has shown that ambiguity sets based on Wasserstein distance \cite{villani2003topics} have desirable out-of-sample performance and asymptotic guarantees \cite{gao2016wasserstein,peyman2017wasserstein}. DRSO with Wasserstein ambiguity sets were recently applied in optimal power flow problems \cite{guo2018data} and uncertain Markov decision processes \cite{yang2017convex}. Motivated by these attractive features, we consider a data-driven approach for DRCCPs where the ambiguity set is defined as the set of distributions that are close (in the Wasserstein distance) to the empirical distribution induced by the observed samples (see Section \ref{sec:prelims} for a formal definition). 

The literature on DRCCPs with Wasserstein ambiguity sets is limited. The authors in \cite{ahmed2018bicriteria} first showed that it is strongly NP-Hard to solve a DRCCP with Wasserstein ambiguity sets and proposed a bi-criteria approximation scheme for covering constraints. While preparing this paper, we became aware of two recent working papers that presented reformulations and approximations of DRCCPs under Wasserstein ambiguity sets \cite{xie2018drccp,kuhn2018drccp} and for constraint functions that are affine in both the decision variable and the uncertainty. Both \cite{xie2018drccp,kuhn2018drccp} show that the exact feasibility set of DRCCPs with affine constraints can be reformulated as mixed integer conic programs. Specifically, Xie \cite{xie2018drccp} studies individual chance constraints and joint chance constraints with right hand side uncertainty, while Chen et. al., \cite{kuhn2018drccp} consider general affine joint chance constraints. Both papers appeared subsequent to the appearance of a preliminary version of our work. In this paper, we lay the foundations for tractable computation of (approximate) solutions of DRCCPs under data-driven Wasserstein ambiguity sets for a broad class of constraint functions.

\noindent {\bf Summary of contributions:} We first reformulate DRCCPs under Wasserstein ambiguity sets under general continuity and boundedness assumptions on the constraint functions (as opposed to the affine case studied in \cite{xie2018drccp,kuhn2018drccp}). We then focus on developing tractable reformulations and algorithms for DRCCPs. Since the feasibility set of (DR)CCPs is nonconvex except for restrictive special cases \cite{prekopa1970probabilistic}, we consider constraint functions that are convex in the decision variable, and replace the exact feasibility set of the DRCCP with its convex conditional value-at-risk (CVaR) approximation following \cite{nemirovski2006convex} leading to a convex program that approximates the original DRCCP. We then present a tractable reformulation of the CVaR approximation when the constraint function is the maximum of functions that are affine in both the decision variable and the uncertainty, and the support of the uncertainty is a polyhedron. When the constraint function is concave in the uncertainty, we show that a recently developed central cutting-surface algorithm for semi-infinite programs \cite{mehrotra2014cutting,luo2017decomposition} can be used to compute an approximately optimal solution of the CVaR approximation of the DRCCP. Finally, when the constraint function is convex in the uncertainty, we compare the feasibility set of the CVaR approximation with those of the sample approximation approach \cite{luedtke2008sample} and the scenario approach \cite{calafiore2005uncertain,campi2008betterbound}.

\noindent {\bf Notation:} The sets of real, positive real, non-negative real, and natural numbers are denoted by $\Rb$, $\Rb_{>0}$, $\Rb_{\ge 0}$, and $\naturalnumbers$, respectively. The extended reals are $\realextended = \real \cup \{+ \infty, - \infty \}$. For $N \in \naturalnumbers$, we let $[N] := \{1,2,\ldots,N\}$. For brevity, we denote $\max(x,0)$ by $x_+$. The closure of a set $\SS$ is denoted by $\cl(\SS)$. Feasibility sets constructed using data are denoted by $\widehat{\cdot}$. For a set $\SS$ and $N \in \naturalnumbers$, we denote the $N$-fold cartesian product as $\SS^N := \Pi_{i=1}^N \SS$. Similar notation holds for the $N$-fold product of any probability distribution. 
\section{Technical Preliminaries}\label{sec:prelims}

Here we collect preliminary notions and results on CCPs, conditional value-at-risk, and Wasserstein ambiguity sets.

\subsection{Chance Constrained Programs and CVaR Approximation}\label{subsec:cc-approx}

Throughout we consider $\Xi$ to be a complete separable metric space with metric $d$. Let $\BB({\Xi})$ and $\PP(\Xi)$ be the Borel $\sigma$-algebra and the set of Borel probability measures on $\Xi$, respectively. A canonical CCP is of the form
\begin{equation}\label{eq:def_ccp}
\begin{aligned}
\underset{x\in X}{\min} & \quad c^\intercal x
\\
\st  & \quad  \Pb(F(x,\xi) \leq 0) \geq 1-\alpha, 
\end{aligned}
\end{equation}
where $X \subseteq \Rb^n$ is a closed convex set, $c \in \Rb^n$, $\alpha \in (0,1)$, $\Pb \in \PP(\Xi)$, and $\map{F}{\Rb^n \times \Xi}{\Rb}$. With the exception of a restricted class of distributions and constraint functions, the feasibility set of \eqref{eq:def_ccp} is nonconvex even when $X$ is convex and $F$ is convex in $x$ for every $\xi$ \cite{prekopa1970probabilistic}. 

Several convex approximations exist that overcome this intractability. We now describe the approximation framework developed in~\cite{nemirovski2006convex} that plays a central role in our results. Consider the function $\map{\psi(z)}{\real}{\real}$, given as $\psi(z) = \max(z+1,0)$. This function belongs to the class of moment generating functions defined in~\cite{nemirovski2006convex}. For a given $\Pb \in \PP(\Xi)$, define $\map{\Psi_{\Pb}}{\real^n \times \real}{\real}$ as
\begin{equation}\label{eq:mgfC_eq}
\Psi_{\mathbb{P}}(x,t) := t \Eb_{\Pb}[\psi(t^{-1}F(x,\xi))].
\end{equation}
Note that if $x \mapsto F(x,\xi)$ is convex for every $\xi \in \Xi$, then
$\Psi_{\Pb}$ is convex in $x$ and $t$. Furthermore, we have
\begin{equation}\label{eq:def_cca_ccp}
\inf_{t > 0} [\Psi_{\mathbb{P}}(x,t)-t\alpha] \leq 0 \implies
\mathbb{P}(F(x,\xi) \leq 0) \geq 1-\alpha.
\end{equation}
Therefore, replacing the chance constraint by $\inf_{t > 0} [\Psi_{\mathbb{P}}(x,t)-t\alpha] \leq 0$ gives a convex conservative approximation of the CCP \eqref{eq:def_ccp}.  This approximation is equivalent to replacing the probabilistic constraint with its conditional value-at-risk (CVaR). Formally,  the CVaR of a random variable $Z$ with distribution $\Pb$ at level $\alpha$ is~\cite{rockafellar2000optimization}
\begin{align}
\CVaR{\Pb}_{1-\alpha}(Z):= \inf_{t \in \Rb} \bigl[
\alpha^{-1} \Eb_{\Pb}[(Z+t)_+] -t \bigr]. \label{eq:CVaR-def}
\end{align}
One can show (as done in~\cite{nemirovski2006convex}) that 
\begin{align}
& \inf_{t > 0} [\Psi_{\Pb}(x,t)-t\alpha] \leq 0 \! 
\!  \iff \! \! \CVaR{\Pb}_{1-\alpha}(F(x,\xi)) \leq 0.
\label{eq:equi}
\end{align}

We note that condition \eqref{eq:equi} is stronger than simply requiring $F(x,\xi) \leq 0$ with probability at least $1-\alpha$ as in this case, $F(x,\cdot)$ could take arbitrarily large values for realizations of $\xi$ with measure at most $\alpha$. In contrast, \eqref{eq:equi} requires the expected value of $F(x,\cdot)$ for the worst possible realizations of $\xi$ with measure $\alpha$ to be at most zero. In other words, \eqref{eq:equi} prescribes a condition on the expected violation of the chance constraint. We henceforth refer the convex conservative approximation of CCP, that is,  probabilistic constraint in \eqref{eq:def_ccp} replaced by \eqref{eq:equi}, as its {\it CVaR approximation}. 

\subsection{Wasserstein ambiguity sets}\label{subsec:dro}

Let $\PP_p(\Xi) \subseteq \PP(\Xi)$ be the set of Borel probability
measures with finite $p$-th moment for $p \in [1,\infty)$. Recall that $d$ is the metric on $\Xi$. Following \cite{villani2003topics}, for $p \in [1,\infty)$, the $p$-Wasserstein distance between measures $\mu, \nu \in \PP_p(\Xi)$ is
\begin{equation}\label{eq:def_wasserstein}
  (W_p(\mu,\nu))^p := \min_{\gamma \in \HH(\mu,\nu)}
  \left\{\int_{\Xi \times \Xi} d^p(\xi,\omega) \gamma(d\xi,d\omega) \right\},
\end{equation}
where $\HH(\mu,\nu)$ is the set of all distributions on
$\Xi \times \Xi$ with marginals $\mu$ and $\nu$. The minimum in \eqref{eq:def_wasserstein} is attained because $d$ is lower semicontinuous \cite{gao2016wasserstein}.  

In this paper, we define the ambiguity set as the set of all distributions that are close to the empirical distribution induced by the observed samples. Specifically, let $\Pbhat_N := \frac{1}{N}\sum^N_{i=1} \delta_{\data_i}$ be the empirical distribution constructed from the observed samples $\{\data_i\}_{i \in [N]}$. We define the data-driven Wasserstein ambiguity set as
\begin{equation}\label{eq:wasserstein-set}
\MM^\theta_N := \{\mu \in \PP_p(\Xi)| W_p(\mu,\Pbhat_N) \leq
\theta\},
\end{equation}
which contains all distributions that are within a distance $\theta \geq 0$ of $\Pbhat_N$. We now present a duality theorem for distributionally robust stochastic optimization over Wasserstein ambiguity sets from \cite{gao2016wasserstein} that is central to proving our reformulations. Let $\map{H}{\Xi}{\Rb}$ and consider the following primal and dual problems
\begin{subequations}\label{eq:gao_primal_dual}
	\begin{align}
	v_P & :=
	\sup_{\mu \in \PP_p(\Xi)} \setdefb{ \int_\Xi H(\xi)
		\mu(d\xi)}{W_p(\mu,\Pbhat_N) \leq \theta}, \label{eq:gao_primal}
	\\
	v_D & := \inf_{\lambda \geq 0} \Bigl[ \lambda \theta^p
	+ \frac{1}{N} \sum^N_{i=1} \sup_{\xi \in \Xi} [H(\xi) - \lambda d^p(\xi,\data_i)] \Bigr]. \label{eq:gao_dual}
	\end{align}
\end{subequations}
\begin{theorem}\longthmtitle{Zero-duality gap~\cite{gao2016wasserstein}}\label{theorem:gao_duality}
Assume that $H$ is upper semicontinuous and either $\Xi$ is bounded, or there exists $\xi_0 \in \Xi$ such that 
	$$ \underset{d(\xi,\xi_0) \to \infty}{\lim \sup}
	\frac{H(\xi)-H(\xi_0)}{d^p(\xi,\xi_0)} < \infty. $$
	Then, the dual problem~\eqref{eq:gao_dual} always admits a minimizer
	$\lambda^*$ and $v_p = v_D < \infty$.
\end{theorem}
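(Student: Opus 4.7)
The plan is to exploit the discrete structure of $\Pbhat_N$, recast the primal as a problem over $N$ conditional probability measures, obtain weak duality by a routine Lagrangian decoupling argument, and then use the growth hypothesis on $H$ to secure both existence of a dual minimizer and no duality gap.

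First I would disintegrate any coupling $\gamma \in \HH(\mu,\Pbhat_N)$. Because one of its marginals is the discrete measure $\tfrac{1}{N}\sum_{i=1}^N \delta_{\data_i}$, we may write $\gamma = \tfrac{1}{N}\sum_{i=1}^N \delta_{\data_i} \otimes \mu_i$ for conditionals $\mu_i \in \PP(\Xi)$, in which case $\mu = \tfrac{1}{N}\sum_i \mu_i$ and the Wasserstein constraint reduces (after minimizing over couplings) to $\tfrac{1}{N}\sum_{i=1}^N \int_\Xi d^p(\xi,\data_i)\,\mu_i(d\xi) \le \theta^p$. Thus
\[
v_P = \sup \Bigl\{ \tfrac{1}{N}\textstyle\sum_{i=1}^N \int H\,d\mu_i : \mu_i \in \PP(\Xi),\ \tfrac{1}{N}\textstyle\sum_{i=1}^N \int d^p(\cdot,\data_i)\,d\mu_i \le \theta^p \Bigr\}.
\]
Weak duality $v_P \le v_D$ now follows routinely: dualize the single scalar constraint with multiplier $\lambda \ge 0$; the Lagrangian decouples across $i$; and for each $i$ the supremum of the linear functional $\mu_i \mapsto \int [H(\xi)-\lambda d^p(\xi,\data_i)]\,\mu_i(d\xi)$ over $\PP(\Xi)$ equals the pointwise $\sup_{\xi \in \Xi}[H(\xi) - \lambda d^p(\xi,\data_i)]$, approached by Dirac masses.

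Denote the dual objective in~\eqref{eq:gao_dual} by $g(\lambda)$. For existence of $\lambda^*$, I would argue that $g(\lambda) < \infty$ for all sufficiently large $\lambda$: when $\Xi$ is bounded this is immediate from upper semicontinuity of $H$, while in the unbounded case the hypothesis $\limsup_{d(\xi,\xi_0)\to\infty}[H(\xi)-H(\xi_0)]/d^p(\xi,\xi_0) < \infty$ supplies a $\lambda_0>0$ for which each map $\xi \mapsto H(\xi) - \lambda_0 d^p(\xi,\data_i)$ is bounded above (after absorbing a finite constant to exchange $\xi_0$ for $\data_i$ using the triangle inequality). Because $g$ is convex and lower semicontinuous as a sum of suprema of affine-in-$\lambda$ functions, and since $g(\lambda) \ge \lambda\theta^p + \tfrac{1}{N}\sum_i H(\data_i)$ so that $g$ is coercive when $\theta>0$ (the case $\theta = 0$ is handled directly from $v_P = \tfrac{1}{N}\sum_i H(\data_i)$), the infimum is attained on a compact sublevel set at some $\lambda^* \in [0,\infty)$.

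The main obstacle is the reverse inequality $v_P \ge v_D$. Here I would construct $\epsilon$-optimal primal measures from $\lambda^*$: using upper semicontinuity of $\xi \mapsto H(\xi) - \lambda^* d^p(\xi,\data_i)$ and a measurable selection theorem (Jankov--von Neumann), choose $\xi_i^\epsilon \in \Xi$ with value within $\epsilon$ of the inner supremum, and then let $\mu_i^\epsilon$ be a convex combination of $\delta_{\xi_i^\epsilon}$ and $\delta_{\data_i}$ whose weights are chosen to make the aggregate Wasserstein budget exactly saturate $\theta^p$ (which is the correct move by complementary slackness when $\lambda^* > 0$). The resulting $\mu^\epsilon = \tfrac{1}{N}\sum_i \mu_i^\epsilon$ is primal feasible and satisfies $\int H\,d\mu^\epsilon \ge g(\lambda^*) - O(\epsilon)$, so letting $\epsilon \to 0$ yields $v_P \ge v_D$. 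The delicate step is precisely this last construction: one must verify that the measurable selection is possible under only upper semicontinuity, that the convex-combination saturation works even when the inner suprema are not attained, and that the case $\lambda^* = 0$ (where the Wasserstein constraint is slack at optimum) is treated separately by a direct density argument. These are exactly the issues handled in the cited reference, and the growth hypothesis is the key ingredient that makes them tractable.
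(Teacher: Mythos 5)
The paper itself offers no proof of this theorem --- it is imported verbatim from \cite{gao2016wasserstein} --- so the only question is whether your sketch would actually establish it. Your overall architecture (disintegrating the coupling against the atoms of $\Pbhat_N$ to reduce the primal to $N$ conditional measures with one scalar moment constraint, Lagrangian decoupling for weak duality, and coercivity of $g(\lambda)\ge \lambda\theta^p+\frac1N\sum_i H(\data_i)$ for attainment of $\lambda^*$ when $\theta>0$) is exactly the standard route and is sound. The invocation of Jankov--von Neumann is unnecessary, by the way: there are only $N$ atoms, so you just pick one near-maximizer per index.

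The genuine gap is in the primal-recovery step. Mixing $\delta_{\xi_i^\epsilon}$ with $\delta_{\data_i}$ so as to saturate the budget does \emph{not} in general close the duality gap, because every unit of mass left at $\data_i$ contributes $H(\data_i)$ to the objective rather than the inner supremum $\Phi_i(\lambda^*):=\sup_\xi[H(\xi)-\lambda^* d^p(\xi,\data_i)]$, and $H(\data_i)$ can be strictly smaller than $\Phi_i(\lambda^*)$. Concretely, take $N=1$, $\data_1=0$, $p=1$, $\theta=3/2$, and $H=10\cdot\mathbb{1}_{\{|\xi|=2\}}+6\cdot\mathbb{1}_{\{|\xi|=1\}}$ (upper semicontinuous). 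Then $\Phi_1(\lambda)=\max(10-2\lambda,\,6-\lambda,\,0)$, the dual optimum is $\lambda^*=4$ with $v_D=8$, and the maximizers of $H(\xi)-4|\xi|$ are $\{\pm1,\pm2\}$, none of which is $\data_1$; the optimal primal measure is $\tfrac12\delta_1+\tfrac12\delta_2$ (expected distance $3/2$, objective $8$), whereas your construction yields at best $\tfrac34\delta_2+\tfrac14\delta_0$ with objective $7.5<v_D$. The correct construction, which is what the cited reference actually does, mixes \emph{two} asymptotic maximizers per atom whose distances straddle a value $D_i\in-\partial\Phi_i(\lambda^*)$ chosen so that $\frac1N\sum_i D_i=\theta^p$; this choice is dictated by the first-order optimality condition $0\in\theta^p+\frac1N\sum_i\partial\Phi_i(\lambda^*)$, not merely by ``saturate the budget.'' Complementary slackness tells you the constraint is tight, but not how to distribute the transport budget across near-maximizers, and that is precisely where the information in $\partial\Phi_i(\lambda^*)$ is needed.
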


We conclude with the stochastic min-max theorem due to
\cite{shapiro2002minimax} which will be required in proving one of our key results.

\begin{theorem}\longthmtitle{Stochastic min-max equality}\label{thm:min-max-shapiro}
  Let $\MM$ be a nonempty (not necessarily convex) set of probability
  measures on $(\Xi,\mathcal{B}(\Xi))$ where $\Xi \subseteq \Rb^m$ and
  $\BB(\Xi)$ is the Borel $\sigma$-algebra. Assume that $\MM$ is
  weakly compact. Let $T \subseteq \Rb^n$ be a closed convex
  set. Consider a function $g:\Rb^n \times \Xi \to \Rb$. Assume that
  there exists a convex neighborhood $V$ of $T$ such that for all
  $t \in V$, the function $g(t,\cdot)$ is measurable, integrable with
  respect to all $\Pb \in \MM$, and
  $\sup_{\Pb \in \MM} \Eb_{\Pb} [g(t,\xi)] < \infty$. Further assume
  that $g(\cdot,\xi)$ is convex on $V$ for all $\xi \in \Xi$. Let
  $\bar{t} \in \argmin_{t \in T} \sup_{\Pb \in \mathcal{M}}
  \mathbb{E}_{\Pb}[g(t,\xi)]$. Assume that for every $t$ in a
  neighborhood of $\bar{t}$, the function $g(t,\cdot)$ is bounded and
  upper semicontinuous on $\Xi$ and the function $g(\bar{t},\cdot)$ is
  bounded and continuous on $\Xi$. Then,
  \begin{equation*}
    \inf_{t \in T} \sup_{\Pb \in \MM} \Eb_{\Pb}[g(t,\xi)]
    = \sup_{\Pb \in \MM} \inf_{t \in T} \Eb_{\Pb}[g(t,\xi)].
  \end{equation*}
\end{theorem}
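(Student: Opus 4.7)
The inequality $\sup_{\Pb \in \MM} \inf_{t \in T} \Eb_{\Pb}[g(t,\xi)] \leq \inf_{t \in T} \sup_{\Pb \in \MM} \Eb_{\Pb}[g(t,\xi)]$ is automatic (weak duality), so my plan is to establish the reverse direction by exhibiting a saddle point. Concretely, I would look for a distribution $\bar{\Pb} \in \MM$ such that (i) $\bar{\Pb} \in \argmax_{\Pb \in \MM} \Eb_{\Pb}[g(\bar{t},\xi)]$, and (ii) $\bar{t} \in \argmin_{t \in T} \Eb_{\bar{\Pb}}[g(t,\xi)]$. Once such $\bar{\Pb}$ is produced, the claimed equality follows from the chain
$$\inf_{t \in T} \sup_{\Pb \in \MM} \Eb_\Pb[g(t,\xi)] \leq \sup_{\Pb \in \MM} \Eb_\Pb[g(\bar{t},\xi)] = \Eb_{\bar{\Pb}}[g(\bar{t},\xi)] = \inf_{t \in T} \Eb_{\bar{\Pb}}[g(t,\xi)] \leq \sup_{\Pb \in \MM} \inf_{t \in T} \Eb_\Pb[g(t,\xi)].$$

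Condition (i) is the easy half. Since $g(\bar{t},\cdot)$ is bounded and continuous on $\Xi$, the functional $\Pb \mapsto \Eb_\Pb[g(\bar{t},\xi)]$ is weakly continuous on $\PP(\Xi)$. Combined with weak compactness of $\MM$, this shows the set $\MM^\star := \argmax_{\Pb \in \MM} \Eb_\Pb[g(\bar{t},\xi)]$ is nonempty and weakly compact.

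The real work is in (ii). Define $\phi(t) := \sup_{\Pb \in \MM} \Eb_\Pb[g(t,\xi)]$, which is convex on $V$ as a pointwise supremum of convex functions and, being finite-valued on this open neighborhood, is continuous at $\bar{t}$. Optimality of $\bar{t}$ on $T$ gives a subgradient $\bar{s} \in \partial \phi(\bar{t})$ with $-\bar{s} \in N_T(\bar{t})$. The main obstacle is to represent $\bar{s}$ as an expectation of subgradients of $g(\cdot,\xi)$ against some member of $\MM^\star$. For this I would invoke a Danskin-type theorem to show
$$\partial \phi(\bar{t}) \subseteq \cl \operatorname{conv} \bigcup_{\Pb \in \MM^\star} \partial_t \Eb_\Pb[g(\bar{t},\xi)],$$
together with the Aumann interchange $\partial_t \Eb_\Pb[g(\bar{t},\xi)] = \int \partial_t g(\bar{t},\xi)\, \Pb(d\xi)$. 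Verifying this representation is the technically delicate step: it is here that weak upper semicontinuity of $\Pb \mapsto \Eb_\Pb[g(t,\xi)]$ (which is what the local boundedness and upper semicontinuity of $g(t,\cdot)$ near $\bar{t}$ buy us), the integrability hypothesis on $g$, and weak compactness of $\MM$ must be combined to justify passing limits through the supremum.

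Granted the decomposition, I would extract $\bar{\Pb} \in \MM^\star$ and a measurable selection $\zeta(\xi) \in \partial_t g(\bar{t},\xi)$ with $\bar{s} = \Eb_{\bar{\Pb}}[\zeta(\xi)]$, after a Carathéodory-type convexification if $\MM^\star$ is not already convex. The normal-cone condition then reads $0 \in \partial_t \Eb_{\bar{\Pb}}[g(\bar{t},\xi)] + N_T(\bar{t})$, and convexity of $t \mapsto \Eb_{\bar{\Pb}}[g(t,\xi)]$ on $V$ upgrades this to global optimality of $\bar{t}$ over $T$, yielding (ii). Thus $(\bar{t},\bar{\Pb})$ is a saddle point, and the minimax equality follows. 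Of all the ingredients, I expect the Danskin-type subdifferential formula above to be the crux of the argument.
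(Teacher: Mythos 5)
The paper does not actually prove this statement---it is imported verbatim from the reference \cite{shapiro2002minimax} and used as a black box---so there is no in-paper proof to compare against. Your saddle-point strategy (weak duality plus construction of $\bar{\Pb}$ via a Danskin/Levin subdifferential formula for $\phi(t)=\sup_{\Pb\in\MM}\Eb_\Pb[g(t,\xi)]$ and an Aumann interchange) is essentially the route taken in that reference, and steps (i), the continuity and subgradient arguments, and the passage from the normal-cone condition to global optimality of $\bar t$ are all sound.

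The genuine gap is in the final extraction step, and it is not a removable technicality. The Danskin-type formula places $\bar s$ in $\cl\operatorname{conv}\bigcup_{\Pb\in\MM^\star}\partial_t\Eb_\Pb[g(\bar t,\xi)]$, so your ``Carath\'eodory-type convexification'' produces $\bar\Pb=\sum_j\lambda_j\Pb_j$ with $\Pb_j\in\MM^\star$; this mixture lies in $\operatorname{conv}(\MM^\star)$ but, since $\MM$ is expressly \emph{not} assumed convex, need not lie in $\MM$. The first two links of your chain survive (linearity of $\Pb\mapsto\Eb_\Pb$ gives $\Eb_{\bar\Pb}[g(\bar t,\xi)]=\max_{\Pb\in\MM}\Eb_\Pb[g(\bar t,\xi)]$), but the last link $\inf_{t\in T}\Eb_{\bar\Pb}[g(t,\xi)]\le\sup_{\Pb\in\MM}\inf_{t\in T}\Eb_\Pb[g(t,\xi)]$ fails because $\bar\Pb\notin\MM$. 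Indeed the equality as stated is false for nonconvex $\MM$: take $\Xi=\{0,1\}$, $\MM=\{\delta_0,\delta_1\}$, $T=[0,1]$, $g(t,\xi)=(t-\xi)^2$; then $\inf_t\sup_\Pb = 1/4$ while $\sup_\Pb\inf_t=0$, with every hypothesis satisfied. What your argument correctly establishes is $\inf_{t\in T}\sup_{\Pb\in\MM}\Eb_\Pb[g(t,\xi)]=\sup_{\Pb\in\operatorname{conv}(\MM)}\inf_{t\in T}\Eb_\Pb[g(t,\xi)]$, which is the form actually proved in \cite{shapiro2002minimax}. This does not damage the paper's use of the theorem, since the Wasserstein ball $\MM^\theta_N$ is convex (convexity of $\mu\mapsto W_p^p(\mu,\Pbhat_N)$ follows by mixing couplings), but your proof should either add convexity of $\MM$ as a hypothesis or state the conclusion with $\operatorname{conv}(\MM)$ on the right-hand side.
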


Note that the above theorem requires the ambiguity set to be weakly compact. This is indeed the case for Wasserstein ambiguity sets constructed from data as stated below.

\begin{proposition}[Corollary 2, \cite{pichler2017quantitative}]\label{proposition:wasserstein_compact}
The Wasserstein ambiguity set $\mathcal{M}^\theta_N$ is tight and weakly-compact.
\end{proposition}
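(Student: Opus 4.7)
The plan is to invoke Prokhorov's theorem: on the Polish space $\Xi$, a family of Borel probability measures is relatively weakly compact if and only if it is tight. Combined with weak closedness of the ambiguity set, this yields weak compactness. Thus the proof reduces to verifying (i) tightness and (ii) weak closedness of $\MM^\theta_N$.

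For tightness, I would exploit the fact that $\Pbhat_N = \frac{1}{N}\sum_{i=1}^{N}\delta_{\data_i}$ is supported on the finite (hence compact) set $\{\data_1,\ldots,\data_N\}$, so it is trivially tight, and then propagate this property to every other $\mu \in \MM^\theta_N$ via a coupling argument. Concretely, for $\mu \in \MM^\theta_N$ let $\gamma \in \HH(\mu,\Pbhat_N)$ attain the infimum in \eqref{eq:def_wasserstein}. For any $r > 0$, Markov's inequality gives
$$ \gamma\bigl(\{(\xi,\omega) : d(\xi,\omega) \geq r\}\bigr) \leq r^{-p}\int d^p(\xi,\omega)\,\gamma(d\xi,d\omega) = r^{-p} W_p^p(\mu,\Pbhat_N) \leq (\theta/r)^p. $$
Because the second marginal of $\gamma$ is concentrated on $\{\data_i\}_{i\in[N]}$, any $\xi$ with $\min_i d(\xi,\data_i) \geq r$ forces $d(\xi,\omega) \geq r$ for $\gamma$-a.e.\ $\omega$. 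Hence
$$ \mu\bigl(\{\xi : \min_i d(\xi,\data_i) \geq r\}\bigr) \leq (\theta/r)^p, $$
uniformly over $\mu \in \MM^\theta_N$. Given $\varepsilon > 0$, choose $r$ with $(\theta/r)^p < \varepsilon/2$; then every $\mu \in \MM^\theta_N$ places mass at least $1-\varepsilon/2$ on the union of closed balls $A_r := \bigcup_{i\in[N]}\{\xi : d(\xi,\data_i)\leq r\}$.

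The main technical obstacle is refining this bounded set $A_r$ into a genuinely compact set $K_\varepsilon \subseteq A_r$ for which $\mu(K_\varepsilon) \geq 1-\varepsilon$ holds \emph{uniformly} in $\mu$. If $\Xi$ is, as in our intended applications, a closed subset of $\Rb^m$, then $A_r$ is itself compact by Heine--Borel and one can simply take $K_\varepsilon = A_r$; in a general Polish space one appeals to the more delicate argument of Pichler, which combines Ulam-type refinements with the finite support of $\Pbhat_N$ to extract a uniform compact set.

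Once tightness is in hand, weak closedness of $\MM^\theta_N$ follows from the weak lower semicontinuity of the map $\mu \mapsto W_p(\mu,\Pbhat_N)$ on $\PP_p(\Xi)$, a standard property of the Wasserstein distance that is derived from the lower semicontinuity of $d^p$ together with Kantorovich duality: if $\mu_k \rightharpoonup \mu$ with $W_p(\mu_k,\Pbhat_N) \leq \theta$, then $W_p(\mu,\Pbhat_N) \leq \liminf_k W_p(\mu_k,\Pbhat_N) \leq \theta$. Prokhorov's theorem then upgrades the relative compactness furnished by tightness to full weak compactness, completing the proof.
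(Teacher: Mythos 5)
The paper does not prove this proposition at all---it is imported verbatim as Corollary~2 of the cited reference---so your self-contained argument is necessarily a different route, and it is the right one for the setting the paper actually works in. The coupling/Markov step is correct: since the second marginal of an optimal $\gamma \in \HH(\mu,\Pbhat_N)$ charges only $\{\data_i\}_{i\in[N]}$, you get $\mu(\{\xi : \min_i d(\xi,\data_i)\ge r\}) \le (\theta/r)^p$ uniformly over $\MM^\theta_N$, and combining this with weak lower semicontinuity of $\mu \mapsto W_p(\mu,\Pbhat_N)$ (which keeps the ball weakly closed, and also forces the limit to stay in $\PP_p(\Xi)$ since $W_p(\mu,\Pbhat_N)\le\theta<\infty$ with $\Pbhat_N\in\PP_p$) and Prokhorov's theorem gives compactness. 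What your proof buys over the citation is transparency: one sees exactly why the finite support of $\Pbhat_N$ matters.

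One correction to your closing remark about general Polish spaces: the obstacle you flag is not removable by ``a more delicate argument,'' because the statement is genuinely false there. In an infinite-dimensional separable Hilbert space with $N=1$ and $\data_1=0$, the measures $\mu_n = (1-\theta^p)\delta_0 + \theta^p\delta_{e_n}$ (with $\{e_n\}$ an orthonormal basis) all satisfy $W_p(\mu_n,\delta_0)=\theta$, yet no compact set carries mass $1-\theta^p/2$ under every $\mu_n$, so the ball is not tight and hence, by Prokhorov, not weakly compact. Your set $A_r$ being compact is therefore an essential hypothesis, not a convenience; it holds here because the paper only ever uses the proposition with $\Xi$ a closed subset of $\Rb^m$ (cf.\ Theorem~\ref{theorem:drccp_exact_reformulation} and Assumption~\ref{ass:main1}), where Heine--Borel applies exactly as you say. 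Stated for an arbitrary complete separable metric space, as the proposition nominally is, the claim would need that restriction made explicit.
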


We now start by presenting exact reformulations of DRCCPs with data-driven ambiguity set $\MM^\theta_N$. 
\section{Distributionally Robust Chance Constrained Program and Exact Reformulation}\label{sec:reformulation}

In this section, we describe our problem of interest: distributionally robust chance constrained program (DRCCP) with Wasserstein ambiguity sets. Following that, we present two exact reformulations of the DRCCP that have simpler representations. Let $\setr{\data_i}_{i=1}^N$ be a set of $N$ samples of $\xi$ available to the decision maker. Given this data and $\theta > 0$, the DRCCP for the Wasserstein ambiguity sets \eqref{eq:wasserstein-set} is 
 \begin{equation}\label{eq:def_drccp_dd}
 \begin{aligned}
 & {\min}\{c^\intercal x: x\in \widehat{X}_{\mathtt{DCP}}\}, \qquad \text{where} \\
 & \Xdcp := \setdefb{x \in X}{\underset{\Pb \in
 		\MM^\theta_N}{\sup} \Pb(F(x,\xi) > 0) \leq \alpha}.
\end{aligned}
\end{equation}
Note that if $F: \Rb^n \times \Xi \to \Rb^K$, then we can instead define F as the component-wise maximum of K constraints. We assume $F$ to be continuous. The probabilistic constraint defining $\Xdcp$ can be equivalently written as
$$ \underset{\Pb \in \MM^\theta_N}{\sup} \Pb(F(x,\xi) \! > \! 0) \! \leq \! \alpha  \! \iff \! \underset{\Pb \in \MM^\theta_N}{\inf} \Pb(F(x,\xi) \! \leq \! 0) \! \geq \! 1-\alpha  .$$

Note that~\eqref{eq:def_drccp_dd} involves optimization over a set of distributions. In order to get a handle on this infinite-dimensional optimization problem, we provide below exact reformulations that involve optimization over finite dimensions. The reformulations presented below were independently shown in \cite{xie2018drccp} for $F$ affine in both $x$ and $\xi$. Here we establish that the results hold more generally. 
\begin{theorem}\longthmtitle{Exact reformulations of DRCCP}\label{theorem:drccp_exact_reformulation}
	Let the function $\map{G}{\real^n \times \Xi}{\realextended}$ be given as  
	\begin{equation}\label{eq:dist_func}
	G(x,\data) := \begin{cases} \underset{\setdef{\xi}{F(x,\xi) > 0}}{\inf} d^p(\xi,\data), &  \setdef{\xi}{F(x,\xi) > 0} \not = \emptyset,
	\\
	+ \infty, &  \text{otherwise}.
	\end{cases}
	\end{equation}
	Suppose $\Xi = \real^m$ and there exists $\xi_0 \in \Xi$ such that
	\begin{equation}\label{eq:F-growth}
	\underset{d(\xi,\xi_0) \to \infty}{\lim \sup} \frac{F(x,\xi)-F(x,\xi_0)}{d^p(\xi,\xi_0)} < \infty, \quad \forall x \in X.
	\end{equation}
	Then, the feasibility set of the DRCCP~\eqref{eq:def_drccp_dd} satisfies 
	\begin{align}
	\Xdcp \!  = \! \left\{ \! \! \begin{array}{l}  x \in X \end{array} \! \! \Bigg| \! \! \begin{array}{l} \exists \lambda \geq 0, \lambda \theta^p + \frac{1}{N} \sum_{i=1}^N s_i \le\alpha, \\ s_i = \max\{1-\lambda G(x,\data_i), 0\}\end{array} \! \right\}. \label{eq:drccp_dist_exact}
	\end{align}
	In addition, if $\setdef{\xi}{ F(x,\xi) > 0}$ is nonempty for every $x \in X$, then
	\begin{align}
	\Xdcp = \left\{x \in X \Bigg| \frac{\theta^p}{\alpha} + \CVaR{\Pbhat_N}_{1-\alpha}(-G(x,\xi)) \leq 0\right\}. \label{eq:drccp_cvar_exact}
	\end{align}
\end{theorem}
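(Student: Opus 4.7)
The plan is to first establish \eqref{eq:drccp_dist_exact} by characterizing the worst-case probability $p(x) := \sup_{\Pb \in \MM^\theta_N} \Pb(F(x,\xi) > 0)$ through a Wasserstein-duality argument, and then derive \eqref{eq:drccp_cvar_exact} from \eqref{eq:drccp_dist_exact} by the algebraic change of variables $\lambda = 1/t$. Writing $A_x := \{\xi : F(x,\xi) > 0\}$, the chance constraint reads $p(x) \le \alpha$, and $G(x,\data)$ in \eqref{eq:dist_func} is precisely the $p$-th power of the distance from $\data$ to $A_x$, with value $+\infty$ when $A_x = \emptyset$.

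The key identity I will prove is $p(x) = \min_{\lambda \ge 0}[\lambda\theta^p + \tfrac{1}{N}\sum_i \max(1-\lambda G(x,\data_i),0)]$. Weak duality ($\le$) comes from integrating the pointwise bound $\mathbf{1}_{A_x}(\xi) \le \lambda d^p(\xi,\omega) + \sup_{\xi'}[\mathbf{1}_{A_x}(\xi') - \lambda d^p(\xi',\omega)]$ against an optimal coupling $\gamma \in \HH(\Pb,\Pbhat_N)$, together with the observation that the inner supremum evaluates to $\max(1 - \lambda G(x,\data_i),0)$ at $\omega = \data_i$. For the reverse inequality, I construct $\epsilon$-optimal primal measures: pick $\xi_i^\epsilon \in A_x$ with $d^p(\xi_i^\epsilon,\data_i) \le G(x,\data_i) + \epsilon$, and consider the two-point mixtures $\Pb_q := \tfrac{1}{N}\sum_i[(1-q_i)\delta_{\data_i} + q_i \delta_{\xi_i^\epsilon}]$ for $q \in [0,1]^N$. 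Each $\Pb_q$ assigns mass at least $\tfrac{1}{N}\sum_i q_i$ to $A_x$ while satisfying $W_p^p(\Pb_q,\Pbhat_N) \le \tfrac{1}{N}\sum_i q_i(G(x,\data_i) + \epsilon)$; maximizing this probability subject to the Wasserstein bound is a finite LP whose classical strong dual reproduces the right-hand side of the identity with $G$ shifted by $\epsilon$, and letting $\epsilon \downarrow 0$ closes the gap. Attainment of the outer minimum follows from convexity, lower semicontinuity, and coercivity (as $\theta > 0$) of the dual objective in $\lambda$. Writing $p(x) \le \alpha$ as the existence of a $\lambda \ge 0$ at which the dual value is at most $\alpha$ then yields \eqref{eq:drccp_dist_exact}; the degenerate case $A_x = \emptyset$ is handled separately and trivially, since $p(x) = 0$ while the dual objective can be driven to $0$ by taking $\lambda \to 0^+$.

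For \eqref{eq:drccp_cvar_exact}, assume $A_x \ne \emptyset$ for every $x \in X$, so $G(x,\data_i) < \infty$. At $\lambda = 0$ the dual sum equals $1 > \alpha$, so any feasible $\lambda$ in \eqref{eq:drccp_dist_exact} is strictly positive. Substituting $t := 1/\lambda > 0$, multiplying the constraint by $t$, and using the identity $t\cdot\max(1 - G_i/t,0) = \max(t-G_i,0)$, the constraint becomes $\theta^p + \tfrac{1}{N}\sum_i(t-G(x,\data_i))_+ \le \alpha t$, i.e., $\theta^p/\alpha + \alpha^{-1}\mathbb{E}_{\Pbhat_N}[(-G(x,\xi)+t)_+] - t \le 0$. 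Taking the infimum over $t > 0$ and comparing with \eqref{eq:CVaR-def} gives \eqref{eq:drccp_cvar_exact}; since $-G \le 0$ forces the unconstrained minimizer in \eqref{eq:CVaR-def} to be nonnegative, the restriction $t > 0$ is immaterial.

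The principal technical obstacle is the strong-duality step: Theorem \ref{theorem:gao_duality} asks for upper semicontinuity, whereas $\mathbf{1}_{A_x}$ is only lower semicontinuous (as $A_x$ is open). I bypass this by exploiting the discrete support of $\Pbhat_N$, which collapses the infinite-dimensional Wasserstein DRO to a finite transport LP whose strong duality is elementary; the growth hypothesis \eqref{eq:F-growth} then plays only a finiteness role. An alternative, more functional-analytic route would replace $\mathbf{1}_{A_x}$ by the bounded continuous approximants $H_{x,\eta}(\xi) := \min(1,\eta(F(x,\xi))_+)$, apply Theorem \ref{theorem:gao_duality} to each, and pass to $\eta \to \infty$, which is where \eqref{eq:F-growth} would be invoked most naturally.
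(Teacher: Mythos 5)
Your proposal is correct and reaches both reformulations, but it takes a genuinely different route to the key identity $\sup_{\Pb\in\MM^\theta_N}\Pb(F(x,\xi)>0)=\min_{\lambda\ge0}\bigl[\lambda\theta^p+\tfrac1N\sum_i(1-\lambda G(x,\data_i))_+\bigr]$. The paper first replaces the indicator of the open set $\{\xi: F(x,\xi)>0\}$ by the indicator of its closure (invoking Proposition 4 of \cite{gao2016wasserstein}) precisely so that the upper-semicontinuity hypothesis of Theorem~\ref{theorem:gao_duality} applies, and then cites that theorem wholesale; the growth condition \eqref{eq:F-growth} is what licenses the citation. You instead prove the identity from scratch: weak duality by integrating the pointwise Lagrangian bound against an optimal coupling (using that the marginal of $\gamma$ on the second coordinate is $\Pbhat_N$, so the supremum term is only ever evaluated at the $\data_i$), and the reverse inequality by exhibiting near-optimal primal measures of the two-point-mixture form and passing through strong duality of the resulting finite fractional-knapsack LP, closing the $\epsilon$-gap via coercivity of the dual objective for $\theta>0$. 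This is more self-contained --- it sidesteps the semicontinuity issue entirely and, as a byproduct, renders \eqref{eq:F-growth} essentially unnecessary for this theorem --- at the cost of re-deriving a duality the paper outsources; it also exploits the finite support of $\Pbhat_N$ in a way that would not generalize to non-discrete nominal distributions, whereas the paper's route would. The remainder of your argument (the identification $s_i=\max(1-\lambda G(x,\data_i),0)$, the observation that $\lambda=0$ forces $\alpha\ge1$, the substitution $t=1/\lambda$, and the sign argument locating the CVaR minimizer) coincides with the paper's. Two small points to tighten: in the reverse inclusion for \eqref{eq:drccp_cvar_exact} you should state explicitly that the infimum in \eqref{eq:CVaR-def} is attained (by coercivity, which uses $G(x,\data_i)<\infty$ from the nonemptiness hypothesis), and the minimizer realizing the constraint must be strictly positive rather than merely nonnegative, since for $t\le 0$ the expression is bounded below by $\theta^p/\alpha>0$; neither point affects the validity of the argument.
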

\begin{proof}
We first show that $\Xdcp$ defined in~\eqref{eq:def_drccp_dd} is equivalent to the set in the right-hand side of~\eqref{eq:drccp_dist_exact}. We suppress the argument $x$ from the functions $F$ and $G$ as the arguments hold point wise for every $x \in X$. We evaluate
\begin{align}
& \sup_{\Pb \in \MM^\theta_N} \Pb(F(\xi)> 0 ) = \sup_{\Pb \in \MM^\theta_N} \Eb_\Pb[\mathbb{1}_{\cl(\xi: F(\xi) > 0)}] \notag
\\ & = \inf_{\lambda \geq 0} \lambda \theta^p + \frac{1}{N} \sum^N_{i=1}
      \sup_{\xi \in \Xi} [\mathbb{1}_{\cl(\xi: F(\xi) > 0)}-\lambda d^p(\xi,\data_i)], \label{eq:indicator-zero-duality}
\end{align}
where the first equality follows from~\cite[Proposition 4]{gao2016wasserstein}, and the second equality is a consequence of the strong duality theorem (Theorem~\ref{theorem:gao_duality}).\footnote{Recall that Theorem~\ref{theorem:gao_duality} requires the function within the expectation to be upper semicontinuous. Since the indicator function of an open set is lower semicontinuous, we replace it with its closure. This substitution is valid due to~\cite[Proposition 4]{gao2016wasserstein}.} Now let $\Xi_1 = \cl(\xi: F(\xi) > 0)$ and $\Xi_2 = \Xi \setminus \Xi_1$. For each term in the summation~\eqref{eq:indicator-zero-duality}, we introduce an auxiliary variable as
\begin{align*}
s_i & = \sup_{\xi \in \Xi} [\mathbb{1}_{\cl(\xi: F(\xi) > 0)}-\lambda d^p(\xi,\data_i)]
\\ & = \textstyle \max\{\sup_{\xi \in \Xi_1} [1-\lambda d^p(\xi,\data_i)], \sup_{\xi \in \Xi_2} -\lambda d^p(\xi,\data_i) \}
\\ & = \textstyle \max\{1-\lambda G(\data_i), \sup_{\xi \in \Xi_2} -\lambda d^p(\xi,\data_i) \},
\end{align*}
where $G$ is defined in \eqref{eq:dist_func}. Now, if $\data_i \in \Xi_2$, the second term is $0$. Alternatively, if $\data_i \in \Xi_1$, then $G(\data_i) = 0$ and the second term is nonpositive, in which case, the maximum evaluates to $1$. Accordingly, we have $s_i = \max\{1-\lambda G(\data_i), 0\}$. Thus, $\widehat{X}_{\mathtt{DCP}}$ \eqref{eq:def_drccp_dd} is equivalently given by \eqref{eq:drccp_dist_exact}. 

For the second reformulation, let $\widehat{X}'_{\mathtt{DCP}}$ denote the set given in \eqref{eq:drccp_cvar_exact}. We first show that $\widehat{X}_{\mathtt{DCP}} \subseteq \widehat{X}'_{\mathtt{DCP}}$. 
Let $x \in \widehat{X}_{\mathtt{DCP}}$ as stated in \eqref{eq:drccp_dist_exact}. Note that we must have $\lambda > 0$. Suppose otherwise, and let $\lambda = 0$. Then, $s_i = 1$ for $i \in [N]$, and consequently, we have $\alpha \geq 1$; a contradiction. Consequently, we can replace $\lambda$ in \eqref{eq:drccp_dist_exact} by $\frac{1}{t} > 0$, and obtain 
\begin{align}
& \frac{\theta^p}{t} + \frac{1}{N} \sum^N_{i=1} \max\left\{1-\frac{G(x,\data_i)}{t},0\right\} \leq \alpha \label{eq:int_exact_reform1}
\\ \iff & \frac{\theta^p}{\alpha} - t + \frac{1}{\alpha N} \sum^N_{i=1} \max\left\{-G(x,\data_i)+t,0\right\} \leq 0 \label{eq:int_exact_reform2}
\\ \implies & \frac{\theta^p}{\alpha} + \CVaR{\Pbhat_N}_{1-\alpha}(-G(x,\xi)) \leq 0 \nonumber
\end{align}
following the definition of conditional value-at-risk \eqref{eq:CVaR-def}; note that we can replace $t$ with $-t$ without loss of generality since the infimum in \eqref{eq:CVaR-def} is over $\Rb$. As a result, $x \in \widehat{X}'_{\mathtt{DCP}}$. 

It remains to show $\widehat{X}'_{\mathtt{DCP}} \subseteq \widehat{X}_{\mathtt{DCP}}$. Let $x \in \widehat{X}'_{\mathtt{DCP}}$. Then, 
$$ \frac{\theta^p}{\alpha} + \inf_{t \in \Rb} \left\{t + \frac{1}{\alpha N} \sum^N_{i=1} (-G(x,\data_i)-t)_{+} \right\}\leq 0. $$
From the fact that $\cl(\xi: F(x,\xi) > 0)$ is nonempty, we have $G(x,\data_i) < \infty$ for $i \in [N]$. As a result, we have $t + \frac{1}{\alpha N} \sum^N_{i=1} (-G(x,\data_i)-t)_{+} \to \infty$ as $|t| \to \infty$. Accordingly, there exists $\bar{t} \in \Rb$ such that
$$ \frac{\theta^p}{\alpha} + \bar{t} + \frac{1}{\alpha N} \sum^N_{i=1} (-G(x,\data_i)-\bar{t})_{+} \leq 0. $$
Since $G$ is nonnegative, we must have $\bar{t} < 0$. Consequently, we can define $\lambda = -\frac{1}{\bar{t}} > 0$, which implies $x \in \widehat{X}_{\mathtt{DCP}}$ as stated in \eqref{eq:drccp_dist_exact}. Therefore, $\widehat{X}_{\mathtt{DCP}} = \widehat{X}'_{\mathtt{DCP}}$.
\end{proof}

The condition~\eqref{eq:F-growth} is met if $F$ is bounded or $\xi \mapsto F(x,\xi)$ is Lipschitz for every $x \in X$ with $p=1$.
In~\cite{ahmed2018bicriteria}, authors show that DRCCPs under Wasserstein ambiguity sets \eqref{eq:def_drccp_dd} are strongly NP-Hard even for affine $F$. 
In light of this fact, we now focus on developing tractable approximations of DRCCPs using CVaR of the constraint function. 

\section{CVaR Approximation of DRCCPs}\label{sec:main-result}

When $F$ is convex in $x$, the CVaR approach of~\cite{nemirovski2006convex} provides a convex inner approximation of the feasibility set of the original (DR)CCP (see Section \ref{subsec:cc-approx} for details). In the remainder of the paper, we study this CVaR approximation of the DRCCP \eqref{eq:def_drccp_dd} under the following assumptions. 

\begin{assumption}\longthmtitle{$F$ is convex-bounded}\label{ass:main1}
The set $\Xi$ is a subset of $\real^m$. The function $F: \Rb^n \times \Xi \to \Rb$ satisfies:
\begin{enumerate}
\item for every $\xi \in \Xi$, $x \mapsto F(x,\xi)$ is convex on $X$,
\item for every $x \in X$, $\xi \mapsto F(x,\xi)$ is bounded on $\Xi$.
\end{enumerate}
\end{assumption}
Note that the second property in the above assumption implies~\eqref{eq:F-growth}. 

Following our earlier discussion in Section~\ref{subsec:cc-approx}, the CVaR approximation of the DRCCP \eqref{eq:def_drccp_dd} is 
\begin{equation}\label{eq:def_drccp_approx}
  \begin{aligned}
      & {\min}\{c^\intercal x: x\in \widehat{X}_{\mathtt{CDCP}}\}, \qquad \text{where} \\
      & \widehat{X}_{\mathtt{CDCP}}\! := \!\setdefb{x \!\in \!X \!}{ \!\underset{\Pb \in \MM^\theta_N}{\! \sup} \!\underset{t \in \Rb}{\inf} [\Eb_\Pb [(F(x,\xi)+t)_+]\!-\!t\alpha]
\!\leq\!0}.
  \end{aligned}
\end{equation} 
We start by reformulating the expression of $\Xcdcp$ and establishing its convexity. First we show that the $\inf$ and the $\sup$ in the constraint of \eqref{eq:def_drccp_approx} can be interchanged. The proof is an application of the min-max theorem due to \cite{shapiro2002minimax} stated as Theorem \ref{thm:min-max-shapiro} in Section \ref{subsec:dro}.

\begin{lemma}\longthmtitle{Min-max equality for the constraint
    function}\label{lemma:drccp_minmax}
Suppose Assumption~\ref{ass:main1} holds. Then for every $x \in X$, we have
  \begin{align}
    & \underset{\Pb \in \MM^\theta_N}{\sup} \,  \underset{t \in \Rb}{\inf} \,
    \Eb_{\Pb} [(F(x,\xi)+t)_+ -t\alpha] \nonumber
    \\ = & \underset{t \in \Rb}{\inf} \,
    \underset{\Pb \in \MM^\theta_N}{\sup} \, \Eb_{\Pb} [(F(x,\xi)+t)_+ -t\alpha]. \label{eq:min-max-equality}
  \end{align}
\end{lemma}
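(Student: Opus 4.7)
The plan is to apply Shapiro's stochastic min-max theorem (Theorem~\ref{thm:min-max-shapiro}) with $T = \Rb$, $\MM = \MM^\theta_N$, and integrand
\[
g(t,\xi) := (F(x,\xi)+t)_+ - t\alpha
\]
for a fixed but arbitrary $x \in X$. Proposition~\ref{proposition:wasserstein_compact} immediately supplies the weak-compactness of $\MM^\theta_N$, and $T = \Rb$ is trivially closed, convex, and serves as its own convex neighborhood $V$. The map $t \mapsto g(t,\xi)$ is the sum of the convex function $t \mapsto \max(F(x,\xi)+t,0)$ and the linear function $-t\alpha$, hence convex on $V$ for every $\xi$, which covers the convexity hypothesis.

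Next I would check the integrability and boundedness hypotheses. By Assumption~\ref{ass:main1}(ii), for every $x \in X$ there is a constant $M(x)$ with $|F(x,\xi)| \le M(x)$ for all $\xi \in \Xi$, so for each fixed $t$ the function $g(t,\cdot)$ is bounded on $\Xi$ by $|M(x)|+|t|+|t\alpha|$. Combined with continuity of $F$ in $\xi$ (assumed just before Theorem~\ref{theorem:drccp_exact_reformulation}) and continuity of $z \mapsto z_+$, this yields that $g(t,\cdot)$ is bounded and continuous on $\Xi$; in particular it is Borel-measurable, integrable against every $\Pb \in \MM^\theta_N$, and $\sup_{\Pb \in \MM^\theta_N}\Eb_\Pb[g(t,\xi)] < \infty$ for every $t \in \Rb$. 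The same bound-plus-continuity statement holds in a neighborhood of any point $\bar{t}$, so the usc/continuity conditions of Theorem~\ref{thm:min-max-shapiro} on $g(t,\cdot)$ near $\bar{t}$ and on $g(\bar{t},\cdot)$ itself are automatic.

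The one step requiring a little care is the existence of the minimizer $\bar{t} \in \argmin_{t \in \Rb} h(t)$, where $h(t) := \sup_{\Pb \in \MM^\theta_N} \Eb_\Pb[g(t,\xi)]$. I would argue that $h$ is convex (as a pointwise supremum of convex functions of $t$) and coercive. Coercivity is checked by splitting into the two tails: for $t \ge M(x)$ one has $F(x,\xi)+t \ge 0$ identically, so $g(t,\xi) = F(x,\xi) + t(1-\alpha)$ and $h(t) \ge -M(x) + t(1-\alpha) \to +\infty$ as $t \to +\infty$ because $1-\alpha > 0$; for $t \le -M(x)$ one has $(F(x,\xi)+t)_+ = 0$ so $h(t) = -t\alpha \to +\infty$ as $t \to -\infty$ because $\alpha > 0$. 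A convex, continuous, coercive function on $\Rb$ attains its infimum, yielding $\bar{t}$.

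With all hypotheses of Theorem~\ref{thm:min-max-shapiro} verified, the conclusion
\[
\inf_{t \in \Rb}\sup_{\Pb \in \MM^\theta_N}\Eb_\Pb[g(t,\xi)]
= \sup_{\Pb \in \MM^\theta_N}\inf_{t \in \Rb}\Eb_\Pb[g(t,\xi)]
\]
is exactly the identity~\eqref{eq:min-max-equality}, which completes the proof. The only non-routine step is the coercivity/existence argument above; everything else reduces to reading off the convexity-boundedness structure already guaranteed by Assumption~\ref{ass:main1} and continuity of $F$ in $\xi$.
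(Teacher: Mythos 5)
Your proposal is correct and follows essentially the same route as the paper: verify the hypotheses of Theorem~\ref{thm:min-max-shapiro} with $T=V=\Rb$, $\MM=\MM^\theta_N$, and $g(t,\xi)=(F(x,\xi)+t)_+-t\alpha$, using Proposition~\ref{proposition:wasserstein_compact} for weak compactness and Assumption~\ref{ass:main1}(ii) for boundedness, then establish attainment of the infimum via convexity and coercivity of $h$. Your two-tail computation just makes explicit the coercivity claim that the paper states more briefly.
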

\begin{proof}
  We suppress the variable $x$ in the proof for better readability.
  We verify that the hypotheses of the min-max theorem (Theorem
  \ref{thm:min-max-shapiro}) hold.

  Drawing the parallelism in notation between our case and
  Theorem~\ref{thm:min-max-shapiro}, note that here $\Rb$ plays the
  role of both $T$ and $V$; $\MM^\theta_N$ that of $\MM$; and the
  function $g$ is $g(t,\xi):=(F(\xi)+t)_+ - t\alpha$. Following Proposition~\ref{proposition:wasserstein_compact}, $\MM^\theta_N$ is weakly compact.
  
  Note that $g$ is
  continuous as $F$ is so. Further since $F$ is bounded, for every
  $t \in \Rb$, the function $\xi \mapsto g(t,\xi)$ is bounded and
  $\sup_{\Pb \in \MM^\theta_N} \Eb_{\Pb} [g(t,\xi)] <
  \infty$. Finally, for every $\xi \in \Xi$, $t \mapsto g(t,\xi)$ is convex. Thus, to conclude the proof it remains to
  show that the infimum on the right-hand side
  of~\eqref{eq:min-max-equality} is attained. Define the function
  \begin{align*}
    h(t):= \underset{\Pb \in \MM^\theta_N}{\sup} \Eb_{\Pb} [(F(\xi)+t)_+ -t\alpha].
  \end{align*}
  Note that for any $\Pb \in \MM^\theta_N$, the function
  $t \mapsto \Eb_{\Pb} [(F(\xi)+t)_+ - t \alpha]$ is convex and
  real-valued. Since $h$ is supremum over a family of such functions,
  $h$ too is convex and real-valued. Hence, $h$ is continuous. Further
  note that $(F(\xi)+t)_+ - t \alpha \to \infty$ as
  $\abs{t} \to \infty$. This fact along with boundedness of $F$
  implies $h(t) \to \infty$ as $\abs{t} \to \infty$. Thus,
  $\inf_{t \in \Rb} h(t)$ exists, concluding the proof.
\end{proof}

Next, using the min-max equality established above and the strong
duality result of distributionally robust optimization presented in
Section~\ref{subsec:dro}, we obtain the following convex reformulation of the CVaR approximation of DRCCP~\eqref{eq:def_drccp_approx}.

\begin{proposition}\longthmtitle{Convex reformulation of~\eqref{eq:def_drccp_approx}}\label{pr:convex-DRCCP}
Under Assumption~\ref{ass:main1}, the CVaR approximation of the DRCCP problem \eqref{eq:def_drccp_approx} is equivalent to the following convex program 
\begin{equation}\label{eq:def_drccp_approx_reform}
\begin{aligned}
      \min & \quad \!\!\!\!c^\intercal x
      \\ \st & \quad \!\!\!\!\lambda \theta^p+\frac{1}{N} \sum^N_{i=1} s_i \leq t\alpha,
      \\ & \quad \!\!\!\!s_i \ge \underset{\xi \in \Xi}{\sup}[F(x,\xi)+t \!- \!\lambda d^p(\xi,\data_i)], \forall i \in [N],
      \\ & \quad \!\!\!\!\lambda \geq 0, t \in \Rb, x \in X, s_i \geq 0, \forall i \in [N].
\end{aligned}
\end{equation}  
Specifically, $x$ lies in the feasibility set of~\eqref{eq:def_drccp_approx} if and only if there exists $(\lm, t, \{s_i\}_{i=1}^N)$ such that $(x,\lm, t, \{s_i\}_{i=1}^N)$ is a feasible point for~\eqref{eq:def_drccp_approx_reform}.
\end{proposition}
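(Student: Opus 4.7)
The plan is to peel away the outer $\sup$–$\inf$ in the definition of $\Xcdcp$ using two tools already in hand, and then rewrite the resulting inner supremum over $\xi$ via an epigraph reformulation that introduces the auxiliary variables $s_i$.

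First, I would fix $x \in X$ and apply Lemma~\ref{lemma:drccp_minmax} to interchange the sup over $\Pb \in \MM^\theta_N$ and the inf over $t \in \Rb$. This rewrites $x \in \Xcdcp$ as the existence of $t \in \Rb$ with
\begin{equation*}
-t\alpha + \sup_{\Pb \in \MM^\theta_N} \Eb_\Pb\bigl[(F(x,\xi)+t)_+\bigr] \leq 0.
\end{equation*}
Next, for fixed $(x,t)$, I would apply the Gao–Kleywegt duality theorem (Theorem~\ref{theorem:gao_duality}) with $H(\xi) := (F(x,\xi)+t)_+$. The upper semicontinuity of $H$ follows from continuity of $F$, and the growth condition on $H$ is immediate from the boundedness of $F$ in $\xi$ (Assumption~\ref{ass:main1}(ii)); this also verifies the boundedness condition~\eqref{eq:F-growth}. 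Strong duality then yields
\begin{equation*}
\sup_{\Pb \in \MM^\theta_N} \Eb_\Pb[(F(x,\xi)+t)_+] = \inf_{\lambda \geq 0} \Bigl[\lambda \theta^p + \tfrac{1}{N}\sum_{i=1}^N \sup_{\xi \in \Xi}\bigl[(F(x,\xi)+t)_+ - \lambda d^p(\xi,\data_i)\bigr]\Bigr],
\end{equation*}
so $x \in \Xcdcp$ iff there exist $t \in \Rb$ and $\lambda \geq 0$ that make the corresponding aggregate inequality at most $t\alpha$.

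The key manipulation is then handling the $(\cdot)_+$ inside the inner supremum. Since $\lambda \geq 0$ and $d^p \geq 0$, one has $\sup_{\xi}[-\lambda d^p(\xi,\data_i)] = 0$, and thus
\begin{equation*}
\sup_{\xi \in \Xi}\bigl[(F(x,\xi)+t)_+ - \lambda d^p(\xi,\data_i)\bigr] = \max\Bigl(\sup_{\xi \in \Xi}[F(x,\xi)+t - \lambda d^p(\xi,\data_i)],\; 0\Bigr).
\end{equation*}
Bounding this joint max from above by $s_i$ is equivalent to the two constraints $s_i \geq 0$ and $s_i \geq \sup_{\xi}[F(x,\xi)+t - \lambda d^p(\xi,\data_i)]$ that appear in~\eqref{eq:def_drccp_approx_reform}. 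Plugging these $s_i$'s into the aggregate inequality and rearranging $-t\alpha$ to the right-hand side yields exactly the first constraint of~\eqref{eq:def_drccp_approx_reform}. Both directions of the equivalence follow by tracking whether each new variable arises from the infimum (sufficiency) or from tightness at an optimizer (necessity), noting that Theorem~\ref{theorem:gao_duality} guarantees the dual minimizer $\lambda^\star$ exists.

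Finally I would verify convexity of~\eqref{eq:def_drccp_approx_reform}: for each fixed $\xi$, the map $(x,t,\lambda) \mapsto F(x,\xi)+t-\lambda d^p(\xi,\data_i)$ is convex in $(x,t,\lambda)$ because $F$ is convex in $x$ (Assumption~\ref{ass:main1}(i)) and the remaining terms are affine; taking pointwise supremum over $\xi$ preserves convexity, so the constraint $s_i \geq \sup_\xi[\cdot]$ defines a convex set, and the other constraints are linear. The main subtlety I anticipate is the $(\cdot)_+$ decoupling step—it must be justified carefully that the two-inequality relaxation ($s_i \geq 0$ and $s_i \geq \sup_\xi[\cdots]$) reproduces the original bound exactly, rather than enlarging the feasible set—but this falls out cleanly from the identity above once $\lambda \geq 0$ is enforced.
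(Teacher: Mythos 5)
Your proposal is correct and follows essentially the same route as the paper: min--max interchange via Lemma~\ref{lemma:drccp_minmax}, then strong duality (Theorem~\ref{theorem:gao_duality}) applied to $H(\xi)=(F(x,\xi)+t)_+$, then an epigraph reformulation introducing the $s_i$. The only (cosmetic) difference is in handling the $(\cdot)_+$ term: you use the pointwise identity $\max(a,0)-b=\max(a-b,-b)$ together with $\sup_{\xi\in\Xi}[-\lambda d^p(\xi,\data_i)]=0$, whereas the paper partitions $\Xi$ into $\{F(x,\xi)+t\ge 0\}$ and its complement and argues by cases on where $\data_i$ lies --- both yield exactly $s_i \ge \max\bigl(\sup_{\xi\in\Xi}[F(x,\xi)+t-\lambda d^p(\xi,\data_i)],\,0\bigr)$.
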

\begin{proof}
We evaluate the constraint in \eqref{eq:def_drccp_approx} as
\begin{align}
& \underset{\Pb \in \MM^\theta_N}{\sup} \underset{t \in \Rb}{\inf}
[\Eb_\Pb[(F(x,\xi)+t)_+] -t\alpha] \nonumber
\\ = & \underset{t \in \Rb}{\inf} \underset{\Pb \in \MM^\theta_N}{\sup} [\Eb_\Pb[(F(x,\xi)+t)_+] -t\alpha] \nonumber
\\ = & \underset{t \in \Rb}{\inf} \underset{\lambda \geq 0}{\inf} [\lambda \theta^p - t\alpha \nonumber 
\\ & \textstyle \qquad +\frac{1}{N} \sum^N_{i=1} \underset{\xi \in \Xi}{\sup} [(F(x,\xi)+t)_+ - \lambda d^p(\xi,\data_i)]]. \label{eq:cvar_const_full}
\end{align}
The first equality follows as a consequence of Lemma \ref{lemma:drccp_minmax}. The second equality is a consequence of the strong duality result in Theorem \ref{theorem:gao_duality}; note that since $F$ is bounded, the condition~\eqref{eq:F-growth} holds (including when $\Xi$ is not bounded). Furthermore, the infimum over $\lambda \geq 0$ is attained following Theorem \ref{theorem:gao_duality}. Thus, $\widehat{X}_{\mathtt{CDCP}}$ is equivalent to the set
\begin{equation}\label{eq:def_drccp_approx_reform2}
\Pi_x\left\{\begin{array}{l}  x \in X, \\ \lambda \ge 0, \\ t \in \Rb \\ \{s_i\}_{i=1}^N \end{array} \Bigg| \begin{array}{l} \lambda \theta^p + \frac{1}{N} \sum_{i=1}^N s_i \le t\alpha, \\ s_i \ge \underset{\xi \in \Xi}{\sup} [(F(x,\xi)+t)_+ 
\\ \qquad \qquad -\lambda d^p(\xi,\data_i)], \forall i \in [N] \end{array} \right\},
\end{equation}
where $\Pi_x$ gives the $x$-component of the argument. 

Now observe that for a given $(x,\lambda,t)$ and $i \in [N]$, 
\begin{align*}
s_i & \ge \max\{\underset{\xi \in \Xi_1}{\sup} F(x,\xi)+t -\lambda d^p(\xi,\data_i),\underset{\xi \in \Xi_2}{\sup} -\lambda d^p(\xi,\data_i)\},
\end{align*}
where $\Xi_1 = \{\xi \in \Xi: F(x,\xi)+t \geq 0\}$, and $\Xi_2 = \Xi \setminus \Xi_1$. We distinguish between the following two cases.

Suppose $\data_i \in \Xi_1$. Then, $\underset{\xi \in \Xi_2}{\sup} -\lambda d^p(\xi,\data_i) < 0$ and $\underset{\xi \in \Xi_1}{\sup} F(x,\xi)+t -\lambda d^p(\xi,\data_i) =  \underset{\xi \in \Xi}{\sup} F(x,\xi)+t -\lambda d^p(\xi,\data_i) > 0$. On the other hand, if $\data_i \in \Xi_2$, we have $\underset{\xi \in \Xi_2}{\sup} -\lambda d^p(\xi,\data_i) = 0 > \underset{\xi \in \Xi_2}{\sup} F(x,\xi)+t -\lambda d^p(\xi,\data_i)$. In both cases, we have
$$ s_i \ge \max\{ \underset{\xi \in \Xi}{\sup} [F(x,\xi)+t -\lambda d^p(\xi,\data_i)],0\}. $$
This concludes the proof.
\end{proof}

The above result shows that the CVaR approximation of DRCCPs under Wasserstein ambiguity sets can be reformulated as a convex optimization problem. However, the constraints involving $s_i$ in \eqref{eq:def_drccp_approx_reform} involve supremum operators. In the remainder of the paper, we develop tractable reformulations and algorithms to solve \eqref{eq:def_drccp_approx_reform} under suitable assumptions on the constraint function $F$.

\section{Reformulations and Algorithms for Several Classes of Constraint Functions}

\subsection{$F$ Piecewise Affine in Uncertainty}\label{sec:cvar-affine}

We now present a tractable reformulation \eqref{eq:def_drccp_approx_reform} when $F$ is the maximum of a set of functions that are affine in $\xi$. The analysis is inspired by a similar reformulation in \cite{peyman2017wasserstein} shown for distributionally robust stochastic optimization.

\begin{proposition}\longthmtitle{Reformulation of DRCCP for piecewise affine
    $F$}\label{prop:affine-reform}
  Let $\Xi = \setdef{\xi \in \Rb^m}{C \xi \leq h}$ be compact, where 
  $C \in \Rb^{p \times m}$ and $h \in \Rb^p$ for some $p > 0$. Suppose that for some positive integer $K$, 
  $F(x,\xi) := \max_{k \le K} x^\intercal A_k \xi + b_k(x)$, where
  $A_k \in \real^{n \times m}$ and $b_k:\Rb^n \to \Rb$ are convex functions
  for all $k \in [K]$. Let the ambiguity set $\MM^\theta_N$ be
  defined using the $1$-Wasserstein metric and $d$ be the standard
  Euclidean distance. Then, the DRCCP~\eqref{eq:def_drccp_approx_reform} is
  equivalent to the following tractable convex optimization problem
  \begin{equation*}\label{eq:def_drccp_approx_reform_affine}
    \begin{aligned}
     & \min 
      \quad c^\intercal x
      \\
      & \st \lambda \theta + \frac{1}{N}
      \sum^N_{i=1} s_i \le t \alpha, 
      \\
      & \qquad \bigl(b_k(x) + t + (x^\intercal A_k - C^\intercal \eta_{ik})^\intercal \data_i + \eta_{ik}^\intercal h\bigr)_+ \le s_i,
      \\
      & \qquad \norm{x^\intercal A_k - C^\intercal \eta_{ik}} \le \lambda, \eta_{ik} \ge 0,
      \\
      & \qquad x\in X, t\in\Rb, \lambda \geq 0,
    \end{aligned}
    \end{equation*}
    where the inequality involving the set of variables $\eta_{ik}$ hold for $i \in \until{N}$ and $k \in \until{K}$.
\end{proposition}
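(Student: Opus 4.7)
The plan is to start from the convex reformulation in Proposition~\ref{pr:convex-DRCCP} and specialize the supremum constraints $s_i \ge \sup_{\xi \in \Xi}[F(x,\xi) + t - \lambda \, d^p(\xi, \data_i)]$ (with $p=1$, so $d^p$ is the Euclidean norm) to the piecewise-affine case. The first step is to interchange the finite maximum over $k$ with the supremum over $\xi$, producing the $K \times N$ family of constraints
\[
s_i \ge \sup_{\xi \in \Xi}\bigl[(A_k^\intercal x)^\intercal \xi + b_k(x) + t - \lambda \|\xi - \data_i\|\bigr], \quad \forall i \in [N],\, k \in [K],
\]
together with $s_i \ge 0$. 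These two bounds are jointly equivalent to demanding that the truncation of the right-hand side at zero be dominated by $s_i$, which is precisely the positive-part form that appears in the claim.

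The second and main step is to dualize the inner supremum over the polyhedron $\Xi = \{\xi : C\xi \le h\}$ for each fixed $(i,k,x,t,\lambda)$. Introducing Lagrange multipliers $\eta_{ik} \ge 0$ for the inequality $C\xi \le h$ and performing the change of variable $\zeta = \xi - \data_i$, the partial inner problem reduces to the conjugate of the Euclidean norm,
\[
\sup_{\zeta \in \Rb^m} (A_k^\intercal x - C^\intercal \eta_{ik})^\intercal \zeta - \lambda \|\zeta\| = \begin{cases} 0, & \|A_k^\intercal x - C^\intercal \eta_{ik}\| \le \lambda, \\ +\infty, & \text{otherwise}. \end{cases}
\]
This is the by-now standard manipulation from \cite{peyman2017wasserstein}. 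Applying strong duality, the inner supremum equals the infimum over $\eta_{ik} \ge 0$ subject to $\|A_k^\intercal x - C^\intercal \eta_{ik}\| \le \lambda$ of the affine expression $(A_k^\intercal x - C^\intercal \eta_{ik})^\intercal \data_i + \eta_{ik}^\intercal h$, which combines with $b_k(x) + t$ to produce the argument of $(\cdot)_+$ in the proposition.

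The third step is to lift $\eta_{ik}$ into the decision variables of the overall program. Since the infimum in the dual is attained (or can be replaced by existential feasibility, because its value depends monotonically only on the chosen $\eta_{ik}$), the constraint $s_i \ge (\text{sup})_+$ is equivalent to existence of $\eta_{ik} \ge 0$ satisfying the norm bound such that the $(\cdot)_+$ inequality holds. The affine-plus-convex structure of the resulting expression (affine in $(t,\eta_{ik})$, convex via $b_k$ and $x^\intercal A_k \data_i$ in $x$) together with the second-order cone constraints on $\|A_k^\intercal x - C^\intercal \eta_{ik}\|$ makes the final program convex and tractable as asserted.

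The main obstacle is justifying strong duality for the inner sup uniformly in the parameters, in particular across $\lambda = 0$ and $\lambda > 0$. For $\lambda > 0$ the Lagrangian dualization of a concave, coercive objective over a nonempty compact polyhedron has zero duality gap via standard convex analysis (Slater's condition holds whenever $\Xi$ has nonempty relative interior, which one may assume without loss of generality). For $\lambda = 0$ the inner problem degenerates to a linear program over the compact polyhedron $\Xi$ and LP strong duality applies directly. Once this case analysis is carried out, the remainder of the argument is algebraic substitution into the formulation of Proposition~\ref{pr:convex-DRCCP}.
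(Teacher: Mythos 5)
Your proposal is correct and mirrors the paper's proof: it starts from Proposition~\ref{pr:convex-DRCCP}, swaps the finite max over $k$ with the supremum over $\xi$, dualizes the inner supremum over the polyhedron to obtain the multipliers $\eta_{ik}$ with the norm bound $\norm{x^\intercal A_k - C^\intercal \eta_{ik}} \le \lambda$, and then lifts the $\eta_{ik}$ into the decision variables. The only (immaterial) difference is in how the dual is reached: the paper writes $\lambda\norm{\xi - \data_i}$ as a supremum over the dual-norm ball and applies a minimax interchange followed by LP duality for $\setdef{\xi}{C\xi \le h}$, whereas you form the Lagrangian of the polyhedral constraint directly and use the conjugate of the Euclidean norm --- and since the constraints are affine, strong duality holds without any Slater-type qualification, so the case analysis you worry about at the end is unnecessary.
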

\begin{proof}
Note that the hypotheses here imply Assumption~\ref{ass:main1} is met. Then following Proposition \ref{pr:convex-DRCCP} and \eqref{eq:def_drccp_approx_reform}, we focus on reformulating the constraints involving the auxiliary variables $s_i, i \in [N]$. In particular, for piecewise maximum of affine functions, we have 
\begin{align}
s_i & \ge (\underset{\xi \in \Xi}\sup [\max_{k\in\until{K}}
      \{ x^\top A_k  \xi + b_k(x)\}+t  -\lambda \norm{\xi -
        \data_i}])_+ \nonumber
\\   & = \max_{k\in\until{K}}\bigl( b_k(x) + t + \sup_{\xi \in \Xi} [x^\intercal A_k \xi - \lambda \norm{\xi - \data_i}]\bigr)_+, \nonumber
\\ & \geq \bigl( b_k(x) + t + \sup_{\xi \in \Xi} [x^\intercal A_k \xi - \lambda \norm{\xi - \data_i}]\bigr)_+, \label{eq:simplification-aff-1}
\end{align}
for every $k \in [K]$. The second equality interchanges the $\sup$ and the $\max$. We now compute
  \begin{align}
    & \sup_{\xi \in \Xi} [x^\intercal A_k \xi - \lambda \norm{\xi - \data_i}] \notag
    \\ \overset{(a)}{=} & \sup_{\xi \in \Xi}[x^\intercal A_k \xi-
      \sup_{\norm{z_{ik}} \le \lambda} z_{ik}^\intercal (\xi - \data_i)] \notag
    \\
     \overset{(b)}{=} & \inf_{\norm{z_{ik}} \le \lambda} \bigl[ z_{ik}^\intercal \data_i
      + \sup_{\xi \in \Xi} [(x^\intercal A_k- z_{ik})^\intercal \xi] \bigr] \notag
    \\
     \overset{(c)}{=} & \inf_{\norm{z_{ik}} \le \lambda} \bigl[ z_{ik}^\intercal \data_i
      + \inf_{\eta_{ik} \ge 0, z_{ik} = x^\intercal A_k - C^\intercal \eta_{ik}} \eta_{ik}^\intercal h \bigr] \notag
    \\
    = & \inf_{\substack{\norm{z_{ik}} \le \lambda, \eta_{ik} \ge 0, \\ z_{ik} = x^\intercal A_k - C^\intercal \eta_{ik}}}
      [ z_{ik}^\intercal \data_i + \eta_{ik}^\intercal h] \notag
    \\
    = & \inf_{\substack{\eta_{ik} \ge 0 \\ \norm{x^\intercal A_k - C^\intercal \eta_{ik}} \le \lambda}}
      \bigl[ (x^\intercal A_k - C^\intercal \eta_{ik})^\intercal \data_i + \eta_{ik}^\intercal h \bigr].
      \label{eq:simplification-aff-2}
  \end{align}
  Here, (a) uses the definition of the norm, (b) follows by $\inf$-$\sup$ interchange due to~\cite[Corollary 37.3.2]{rockafellar1970convex-analysis}, and (c) writes the dual form of the
  inner linear program (from 
  $\Xi = \setdef{\xi \in \Rb^m}{C\xi \le
    h}$). Substituting~\eqref{eq:simplification-aff-2}
  in~\eqref{eq:simplification-aff-1}, we obtain
  \begin{align}
     s_i \geq & \displaystyle \Bigl( b_k(x) + t +\inf_{\substack{\eta_{ik} \ge 0 \\ \norm{x^\intercal A_k- C^\intercal \eta_{ik}} \le \lambda}}
      [(x^\intercal A_k - C^\intercal \eta_{ik})^\intercal
      \data_i \notag
      \\ 
      & \qquad \qquad \qquad \quad + \eta_{ik}^\intercal h] \Bigr)_+, \forall k \in \until{K}. \label{eq:s-ineq}
  \end{align}
The above equation holds if and only if there exists $\eta_{ik} \ge 0$ for all $k \in \until{K}$ such that for all $k \in \until{K}$, 
\begin{equation}\label{eq:s-ineq-2}
\begin{aligned}
& s_i \geq \Bigl( b_k(x) + t + (x^\intercal A_k - C^\intercal \eta_{ik})^\intercal
\data_i + \eta_{ik}^\intercal h) \Bigr)_+, 
\\
&   \norm{x^\intercal A_k - C^\intercal \eta_{ik}} \le \lambda, 
\end{aligned}
\end{equation}
The ``if" part in the above statement is straightforward. For the ``only if" part consider two cases for any $k \in \until{K}$: either the $\inf$ in~\eqref{eq:s-ineq} is attained or it is not. In the former, the optimizer of the $\inf$ satisfies~\eqref{eq:s-ineq-2}. In the later the optimal value of $\inf$ is $-\infty$ in which case the constraint~\eqref{eq:s-ineq} reads as $s_i \ge 0$. Thus, one can find $\eta_{i,k}$ such that the expression inside $( \cdot )_+$ is negative in~\eqref{eq:s-ineq-2} and so the constraint in~\eqref{eq:s-ineq-2} reduces to $s_i \ge 0$. This concludes the proof.
\end{proof}

\begin{remark}\longthmtitle{Comparison with literature and exactness of CVaR approximation}
	{\rm 
In \cite{xie2018drccp,kuhn2018drccp}, authors derive the reformulation given in Proposition~\ref{prop:affine-reform} for the case when $\Xi = \real^m$. In addition, they show that when $\Xi = \Rb^m$ and $N\alpha \leq  1$, the CVaR approximation is exact, i.e., $\widehat{X}_{\mathtt{DCP}} = \widehat{X}_{\mathtt{CDCP}}$. \oprocend}
\end{remark}
In the following subsection, we present an algorithm that solves CVaR approximation of DRCCPs when the constraint function is concave in uncertainty.

\subsection{$F$ Concave in Uncertainty}\label{sec:concave-uncertainty}

Here we aim to develop an algorithm for~\eqref{eq:def_drccp_approx_reform} when $F$ is concave in $\xi$. The roadblock in solving~\eqref{eq:def_drccp_approx_reform} is the supremum operator present in the constraint that makes implementing first- or second-order methods almost impossible. To construct the algorithm, we view~\eqref{eq:def_drccp_approx_reform} as a semi-infinite program and employ the central cutting surface algorithm proposed in~\cite{mehrotra2014cutting}. The algorithm requires the feasibility set of the problem to be compact. Thus, as a first step, we identify a compact set which contains the optimizers of~\eqref{eq:def_drccp_approx_reform}. Our results hold under the following assumption.

\begin{assumption}\longthmtitle{$F$ concave in uncertainty and existence of robustly feasible point}\label{ass:compact-robust-feasibility}
	The sets $X$ and $\Xi$ are compact. For every $x \in X$, the function $\xi \mapsto F(x,\xi)$ is concave. There exists $\bar x \in X$ such that $F(\bar x,\xi) \le - \delta < 0$ for all $\xi \in \Xi$. 
\end{assumption}

The next result provides bounds on the optimizers of~\eqref{eq:def_drccp_approx_reform}. 
\begin{lemma}\longthmtitle{Optimizers of~\eqref{eq:def_drccp_approx_reform} belong to a compact set}\label{le:opt-bound}
Under Assumption~\ref{ass:main1} and~\ref{ass:compact-robust-feasibility}, the optimizers of~\eqref{eq:def_drccp_approx_reform} belong to the set
	$X \times [0,t^M] \times [0,\lm^M] \times [0,\alpha Nt^M]^N$,
	where 
	\begin{align*}
	t^M & := \frac{1}{1-\alpha} \sup_{x \in X, \xi \in \Xi} - F(x,\xi), \quad \text{and} \quad \lm^M = \frac{\alpha t^M}{\theta^p}.
	\end{align*}
\end{lemma}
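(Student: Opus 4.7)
The plan is to establish the four bounds directly from the two feasibility constraints of~\eqref{eq:def_drccp_approx_reform}. Let $(x,\lm,t,\{s_i\}_{i=1}^N)$ be any feasible (and in particular, optimal) point. The lower bounds $\lm \ge 0$ and $s_i \ge 0$ are enforced explicitly, and the lower bound $t \ge 0$ is immediate: the first constraint gives $t\alpha \ge \lm\theta^p + \frac{1}{N}\sum_i s_i \ge 0$, and $\alpha > 0$ then forces $t \ge 0$.

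The main step is the upper bound $t \le t^M$. The key observation is that evaluating the supremum in the second constraint of~\eqref{eq:def_drccp_approx_reform} at $\xi = \data_i$ (where $d^p(\data_i,\data_i)=0$) yields the pointwise inequality $s_i \ge F(x,\data_i)+t$. Averaging over $i \in [N]$ and plugging into the first constraint, I obtain
\[
t\alpha \;\ge\; \lm\theta^p + \tfrac{1}{N}\sum_{i=1}^N s_i \;\ge\; t + \tfrac{1}{N}\sum_{i=1}^N F(x,\data_i).
\]
Rearranging gives $t(1-\alpha) \le -\tfrac{1}{N}\sum_i F(x,\data_i) \le \sup_{\xi \in \Xi}(-F(x,\xi)) \le \sup_{x'\in X,\xi \in \Xi}(-F(x',\xi))$. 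Since $\alpha \in (0,1)$, dividing by $1-\alpha$ yields $t \le t^M$. Note that compactness of $X$ and $\Xi$ together with continuity of $F$ ensures that $t^M$ is finite, and robust feasibility ($F(\bar x,\xi) \le -\delta < 0$) ensures $t^M > 0$.

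The remaining two bounds drop out as direct corollaries of the first constraint combined with the just-derived bound on $t$. Using $\tfrac{1}{N}\sum_i s_i \ge 0$, I get $\lm\theta^p \le t\alpha \le \alpha t^M$, so $\lm \le \alpha t^M/\theta^p = \lm^M$. Isolating a single $s_i$ and using $\lm\theta^p \ge 0$ and $s_j \ge 0$ for $j \neq i$ gives $s_i/N \le t\alpha \le \alpha t^M$, hence $s_i \le \alpha N t^M$.

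I do not expect a genuine obstacle here: the entire argument is a direct manipulation of the two feasibility constraints, with the only nontrivial step being the pointwise evaluation $\xi = \data_i$ that converts the supremum constraint into a usable linear lower bound on $s_i$. The role of Assumption~\ref{ass:compact-robust-feasibility} is twofold: compactness of $X$ and $\Xi$ (together with continuity of $F$) guarantees $t^M < \infty$, and the existence of $\bar x$ with $F(\bar x,\xi) \le -\delta$ ensures the feasibility set of~\eqref{eq:def_drccp_approx_reform} is nonempty (take $\lm=0$, $t=0$, $s_i=0$), so that the statement about optimizers is not vacuous.
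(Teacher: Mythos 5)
Your proof is correct and follows essentially the same route as the paper: both arguments hinge on lower-bounding the supremum in the $s_i$-constraint by evaluating it at $\xi = \data_i$ (where $d^p(\data_i,\data_i)=0$) and then feeding the resulting bound into the first constraint. Your version is in fact slightly more streamlined, since by working with the constraint exactly as written in~\eqref{eq:def_drccp_approx_reform} (no positive part) you obtain $t \le t^M$ directly, whereas the paper argues by contradiction and must first establish $(F(x,\xi)+t)_+ = F(x,\xi)+t$ for $t > t^M$; the bounds on $\lm$ and the $s_i$ are handled identically.
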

\begin{proof}
	Let $(\xo,\topt,\lmo,\{s^\star_i\}^N_{i=1})$ be an optimizer of~\eqref{eq:def_drccp_approx_reform}. By definition, $\xo \in X$. For the sake of contradiction, assume $\topt \not \in [0,t^M]$. Note that for each $i \in \until{N}$,
	\begin{align*}
	\sup_{\xi \in \Xi}  (F(x,\xi) + t)_+ \! - \!  \lm d^p(\xi,\data_i) & \! \ge \! ( F(x,\data_i) + t)_+ \! - \! \lm d^p(\data_i,\data_i) 
	\\
	& = ( F(x,\data_i) + t)_+ \ge 0.
	\end{align*}
	Therefore, the left-hand side of the first constraint in~\eqref{eq:def_drccp_approx_reform} is lower bounded by $\lm \theta^p - t \alpha$. If $\topt < 0$, then the constraint is violated as $\lm \ge 0$. The other possibility is $\topt > t^M$. Since $\alpha < 1$, we have 
	$t^M > \sup_{x \in X, \xi \in \Xi} - F(x,\xi)$
	which implies 
	$\topt > -F(x,\xi), \quad \forall x \in X, \xi \in \Xi$.
	Using this fact, we get 
	\begin{align}
	(F(x,\xi) + \topt)_+ = F(x,\xi) + \topt, \quad \forall x \in X, \xi \in \Xi. \label{eq:kill-proj}
	\end{align}
	To arrive at the contradiction, we will show that the constraint in~\eqref{eq:def_drccp_approx_reform} is violated for such a choice of $\topt$. Note that
	\begin{align}
	\lmo \theta^p - \topt \alpha &+ \frac{1}{N} \sum_{i=1}^N \sup_{\xi \in \Xi} (F(\xo,\xi)+\topt)_+ - \lmo d^p(\xi,\data_i) \notag
	\\
	& \overset{(a)}{\ge} \lmo \theta^p - \topt \alpha + \frac{1}{N} \sum_{i=1}^N (F(\xo,\data_i) + \topt) \notag
	\\
	& \overset{(b)}{\ge} \topt (1-\alpha) + \inf_{x \in X, \xi \in \Xi} F(x,\xi) \notag
	\\
	& = \topt(1-\alpha) - \sup_{x \in X, \xi \in \Xi} - F(x,\xi) > 0, \label{eq:constraint-violation-t}
	\end{align}
	where in (a) we lower bound the supremum in each $i$-th term by substituting $\xi$ with $\data_i$ and then use~\eqref{eq:kill-proj}. In (b), we use nonnegativity of $\lmo$ and a lower bound on $F$. From~\eqref{eq:constraint-violation-t}, we conclude that $\topt \in [0,t^M]$. To show that $\lmo \in [0,\lm^M]$, recall that the left-hand side of the first constraint of \eqref{eq:def_drccp_approx_reform} is lower bounded by $\lm \theta^p - t \alpha$. For the constraint to be feasible we would require $\lm \theta^p - t \alpha \le 0$ implying $\lm \le t \alpha /\theta^p$. The bound on $\lmo$ then follows by using the bound on $\topt$. 	
	
Finally, since $\lambda^\star \geq 0$ and $s^\star_i \geq 0, \forall i \in [N]$, the first constraint of \eqref{eq:def_drccp_approx_reform} implies $s^\star_i \leq \alpha N t^M, \forall i \in [N]$.
\end{proof}

Using the above result, one can restrict the feasibility set of~\eqref{eq:def_drccp_approx_reform} without disturbing its optimizers. We denote the decision variables of \eqref{eq:def_drccp_approx_reform} as $y:=(x,t,\lm,\setr{s_i}_{i=1}^N)$, and its feasibility set as the compact set $Y := X \times [0,t^M] \times [0,\lm^M] \times [0,\alpha Nt^M]^N$. The optimization problem~\eqref{eq:def_drccp_approx_reform} over the restricted domain written as semi-infinite program is
\begin{equation}\label{eq:semi-inf}
\begin{aligned}
\min & \quad \!\!\!\! c^\intercal x
\\
\st & \quad \!\!\!\! \lambda \theta^p + \frac{1}{N} \sum_{i=1}^N s_i  \le t \alpha,
\\ 
& \quad \!\!\!\! s_i \ge F(x,\xi)+t \!- \!\lambda d^p(\xi,\data_i), \forall \xi \in \Xi, \, \forall i \in [N],
\\
& \quad \!\!\!\! (x,t,\lambda,\{s_i\}^N_{i=1}) \in Y.
\end{aligned}
\end{equation}
Now, for each $i \in \until{N}$, we define the function 
\begin{align*}
H_i(y,\xi):= F(x,\xi)+t - \lm d^p(\xi,\data_i) - s_i.
\end{align*}
Next, set the parameter $B > 0$ satisfying
\begin{align*}
	B > \norm{g^i(y,\xi)}, \forall y \in Y, \, \forall \xi \in \Xi, \forall i \in \until{N}
\end{align*}
where $g^i(y,\xi) = (g^i_y(y,\xi) , g^i_\xi (y,\xi)) \in \partial_y H_i(y,\xi) \times \partial_\xi H_i(y,\xi)$. That is, $B$ bounds the set of subgradients of $H_i$, for all $i$, over the feasibility set $Y$. Semi-infinite optimization problems are difficult to solve in general. Thus, our objective is to design an algorithm that can find an approximate solution to the problem~\eqref{eq:semi-inf}. This is made precise below.

\begin{definition}\longthmtitle{Approximate feasibility and optimality of \eqref{eq:semi-inf}}\label{def:semi_inf_opt}
We say that a point $y = (x,t,\lm,\setr{s_i}_{i=1}^N) \in Y$ is $\eta$-feasible for the problem~\eqref{eq:semi-inf} if it satisfies 
\begin{gather*}
\lambda \theta^p + \frac{1}{N} \sum_{i=1}^N s_i  \le t \alpha,
\\
s_i + \eta \ge F(x,\xi)+t \!- \!\lambda d^p(\xi,\data_i), \forall \xi \in \Xi, \, \forall i \in [N],
\end{gather*}
Further, a point $(x_\eta^\star, t_\eta^\star, \lm_\eta^\star, \setr{s_{i,\eta}^\star}_{i=1}^N)$ is an $\eta$-optimal solution of~\eqref{eq:semi-inf} if it is $\eta$-feasible and $c^\intercal x_\eta^\star \le c^\intercal x^\star$ where $(x^\star, t^\star, \lm^\star, \setr{s_i^\star}_{i=1}^N)$ is an optimizer of~\eqref{eq:semi-inf}.
\end{definition}

We propose an algorithm that finds an $\eta$-optimal solution of~\eqref{eq:semi-inf}. Our scheme involves solving a convex optimization problem, termed the master problem, at every iteration of the algorithm. The master problem for the $k$th iteration is 
\begin{equation}\label{eq:master}
\begin{aligned}
\max & \quad \sigma 
\\
\st & \quad c^\intercal x + \sigma \le M^{(k-1)},
\\
& \quad \lambda \theta^p + \frac{1}{N} \sum_{i=1}^N s_i \le t \alpha,
\\
& \quad H_i(y,\xi_i) + \sigma B \leq 0, \forall \xi_i \in Q^{(k-1)}_i,
\\
& \quad (x,t,\lambda,\{s_i\}^N_{i=1}) \in Y.
\end{aligned}
\end{equation}
Various terms of the above optimization are introduced below where we elaborate on the steps of Algorithm~\ref{alg:cut-surface}. 

Each iteration $k$ starts by solving~\eqref{eq:master}. The aim of this step is to find $y^{(k)}$ that is {\it robustly} feasible to the constraints sampled till the $k$th iteration, $Q_i^{(k-1)}$, $i \in \until{N}$, and that also improves the upper bound on the objective value $M^{(k-1)}$. The variable $\sigma^{(k)}$ denotes this improvement. Upon solving~\eqref{eq:master}, two cases arise. First, $y^{(k)}$ is $\eta$-feasible and so, there does not exist, for any $i$, a violating constraint $\xi_i^{(k)}$ that can be added to $Q_i^{(k-1)}$. In this case, we move to Step~\ref{step:opt-cut} where the constraint set is kept same, the best estimate of the optimizer $\tilde{y}^{(k-1)}$ is updated to the $\eta$-feasible solution found in this iteration, and the upper bound is updated. In the second case, a violating constraint is determined for each $i$ (if possible) in Step~\ref{step:find-xi}. Subsequently, in Step~\ref{step:feas-cut}, the constraint set is updated while the best estimate of the optimizer and the upper bound are kept the same. The algorithm converges when the objective value cannot be improved anymore over the set of all $\eta$-feasible solutions.

\begin{algorithm}
	\SetAlgoLined
	\DontPrintSemicolon
	\SetKwInOut{giv}{Data} \SetKwInOut{ini}{Input}
	\SetKwInOut{state}{State} \SetKwInput{start}{Initialize}
	\ini{Assumption~\ref{ass:compact-robust-feasibility} holds. For a given $y$ and $i \in \until{N}$, whenever $\sup_{\xi \in \Xi} H_i(y,\xi) > \eta$, then there exists an oracle that determines a point $\xi \in \Xi$ such that $H_i(y,\xi) > 0$.}
	\start{Set $k =1$, $M^{(0)} = U:= \max_{x \in X} c^\intercal x$, $Q^{(0)}_i = \emptyset$ for all $i \in \until{N}$, $\yt^{(0)} = 0$.}
	Determine the optimizer $(y^{(k)}, \sigma^{(k)})$ of the master problem~\eqref{eq:master} \; \label{step:solve-master}
	If $\sigma^{(k)} = 0$, stop and return $\tilde{y}^{(k-1)}$ \; 
	For each $i \in \until{N}$, find (if possible) $\xi_i^{(k)} \in \Xi$ such that $H_i(y^{(k)}, \xi_i^{(k)}) > 0$ and then go to Step~\ref{step:feas-cut}; if no such point exists for any $i$, then go to Step~\ref{step:opt-cut} \;  \label{step:find-xi}
	Set for each $i \in \until{N}$, $Q_i^{(k)} = Q_i^{(k-1)} \cup \{\xi_i^{(k)}\}$ whenever a point $\xi_i^{(k)}$ is found in Step~\ref{step:find-xi}, otherwise $Q_i^{(k)} = Q_i^{(k-1)}$; Set $\tilde{y}^{(k)} = \tilde{y}^{(k-1)}$ and $M^{(k)} = M^{(k-1)}$; Go to Step~\ref{step:last} \; \label{step:feas-cut}
	Set $Q^{(k)} = Q^{(k-1)}$, $\tilde{y}^{(k)} = y^{(k)}$, and $M^{(k)} = c^\top x^{(k)}$ \; \label{step:opt-cut}
	Increase $k$ by one and go to Step~\ref{step:solve-master} \; \label{step:last}
	\caption{A central cutting-surface algorithm for~\eqref{eq:semi-inf}}
	\label{alg:cut-surface}
\end{algorithm}


The next result states the correctness of Algorithm~\ref{alg:cut-surface}. The proof involves arguments similar in reasoning to those presented in~\cite{mehrotra2014cutting}. An important ingredient is the compactness of the feasibility set which we achieved due to Lemma~\ref{le:opt-bound}. 
\begin{proposition}\longthmtitle{Convergence guarantee of Algorithm~\ref{alg:cut-surface}}\label{pr:convergence}
	Let Assumptions~\ref{ass:main1} and~\ref{ass:compact-robust-feasibility} hold. Consider the iterates $(\yt^{(k)})_{k=1}^\infty$ generated by Algorithm~\ref{alg:cut-surface}. 
	\begin{enumerate}
		\item If Algorithm~\ref{alg:cut-surface} terminates in the $k$th iteration, then $\yt^{(k-1)}$ is an $\eta$-optimal solution to~\eqref{eq:semi-inf}.
		\item If Algorithm~\ref{alg:cut-surface} does not terminate, then there exists an index $\hat{k}$ such that the sequence $\setr{\yt^{(\hat{k} +i)}}_{i=1}^\infty$ consists entirely of $\eta$-feasible solutions of~\eqref{eq:semi-inf}.
		\item If Algorithm~\ref{alg:cut-surface} does not terminate, then the sequence $\setr{\yt^{(k)}}_{k=1}^\infty$ has an accumulation point, and each accumulation point is an $\eta$-optimal solution to~\eqref{eq:semi-inf}. 
	\end{enumerate}
\end{proposition}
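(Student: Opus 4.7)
The proof adapts the central cutting-surface argument template of~\cite{mehrotra2014cutting}, with compactness of $Y$ (Lemma~\ref{le:opt-bound}) and of $\Xi$, continuity and convexity of $H_i$ in $y$, and the uniform subgradient bound $\|g^i\|\le B$ as the essential ingredients.

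For \emph{part (i)}, when $\sigma^{(k)}=0$, the iterate $\tilde{y}^{(k-1)}$ is $\eta$-feasible because $\tilde{y}$ is updated only at Step~\ref{step:opt-cut}, which fires precisely when the oracle finds no violator for any $i$; by the oracle hypothesis this forces $\sup_\xi H_i(\tilde{y}^{(k-1)},\xi)\le\eta$. For optimality, let $y^\star$ be an optimizer of~\eqref{eq:semi-inf} and let $\bar{y}=(\bar{x},\delta/2,0,\dots,0)$ be a strictly feasible point constructed from the $\bar{x}$ in Assumption~\ref{ass:compact-robust-feasibility}, so $H_i(\bar{y},\xi)\le-\delta/2$ for every $\xi$. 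Suppose for contradiction $c^\top x^\star<M^{(k-1)}$; then for small $\mu\in(0,1)$ the convex combination $y_\mu=\mu\bar{y}+(1-\mu)y^\star\in Y$ satisfies $c^\top x_\mu<M^{(k-1)}$, and convexity of $H_i$ in $y$ yields $H_i(y_\mu,\xi_i^{(j)})\le-\mu\delta/2$ for every sampled cut, so $(y_\mu,\mu\delta/(2B))$ is feasible for the master with positive $\sigma$, contradicting $\sigma^{(k)}=0$. Hence $c^\top x^\star\ge M^{(k-1)}=c^\top\tilde{x}^{(k-1)}$, which is $\eta$-optimality.

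For \emph{part (ii)}, since $\tilde{y}$ changes only at Step~\ref{step:opt-cut} and every such update produces an $\eta$-feasible point, it suffices to prove that Step~\ref{step:opt-cut} fires at some finite iteration. Suppose not; then the oracle returns a violator at every iteration, and by pigeonhole some index $i$ receives infinitely many fresh violators along a subsequence $k_1<k_2<\cdots$. For $l<j$ the master constraint enforces $H_i(y^{(k_j)},\xi_i^{(k_l)})+\sigma^{(k_j)}B\le 0$ while $H_i(y^{(k_l)},\xi_i^{(k_l)})>0$; the Lipschitz bound in $y$ then yields
\[
-\sigma^{(k_j)}B\ge H_i(y^{(k_j)},\xi_i^{(k_l)})\ge H_i(y^{(k_l)},\xi_i^{(k_l)})-B\,\|y^{(k_j)}-y^{(k_l)}\|>-B\,\|y^{(k_j)}-y^{(k_l)}\|,
\]
so $\sigma^{(k_j)}<\|y^{(k_j)}-y^{(k_l)}\|$. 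Extracting a Cauchy subsequence in compact $Y$ forces $\sigma^{(k_j)}\to 0$; by monotonicity of $\sigma^{(k)}$ (the cut pool only grows while $M^{(k-1)}$ stays at $U$), the full sequence satisfies $\sigma^{(k)}\to 0$. Passing to further subsequences with $(y^{(k_j)},\xi_i^{(k_j)})\to(y^*,\xi_i^*)$, continuity of $H_i$ and Berge's maximum theorem (giving continuity of $y\mapsto\max_\xi H_i(y,\xi)$ since $\Xi$ is compact) together with the oracle hypothesis yield $\max_\xi H_i(y^*,\xi)\ge\eta$, while passing to the limit in the accumulated cuts gives $H_i(y^*,\xi)\le 0$ on the closure of $\{\xi_i^{(k_l)}\}$; a final sub-subsequence extraction shows this closure must contain an $\eta$-argmax at $y^*$, producing the contradiction.

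For \emph{part (iii)}, $\{\tilde{y}^{(k)}\}\subset Y$ compact has accumulation points, and by (ii) the tail lies in the $\eta$-feasible set, which is closed in $y$ because the map $y\mapsto\max_\xi[F(x,\xi)+t-\lambda d^p(\xi,\data_i)-s_i]$ is continuous (Berge with compact $\Xi$). Hence every accumulation point $\tilde{y}^*$ is $\eta$-feasible. The bound $M^{(k)}=c^\top\tilde{x}^{(k)}$ is non-increasing, hence converges to some $M^*$; the Slater-based argument of (i), applied in the limit $\sigma^{(k)}\to 0$ with the same convex-combination construction, yields $M^*\le c^\top x^\star$, so $c^\top\tilde{x}^*=M^*\le c^\top x^\star$, establishing $\eta$-optimality.

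\textbf{Main obstacle.} The delicate step is the contradiction in (ii): the oracle returns only \emph{some} violator (not the argmax), so showing that the accumulated cuts eventually exclude every $\eta$-violating $\xi$ at the limit $y^*$ requires joint subsequence extraction in the compact product $Y\times\Xi$ and full use of both the oracle's $\eta$-gap and the Lipschitz continuity of $H_i$ in $(y,\xi)$. Once this is in hand, (i) and (iii) follow from standard convexity-and-Slater reasoning.
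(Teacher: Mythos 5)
The paper does not actually spell out a proof of this proposition; it defers to the central cutting-surface arguments of \cite{mehrotra2014cutting}, flagging only that compactness of $Y$ (Lemma~\ref{le:opt-bound}) is the key enabling ingredient. Your reconstruction follows exactly that template, and parts (i) and (iii) are essentially right: the Slater point $\bar y=(\bar x,\delta/2,0,\dots,0)$ from Assumption~\ref{ass:compact-robust-feasibility} does satisfy $H_i(\bar y,\xi)\le -\delta/2$ for all $\xi$, the convex-combination argument with $y_\mu$ correctly shows $\sigma^{(k)}>0$ whenever $M^{(k-1)}>c^\intercal x^\star$, and the pigeonhole-plus-Cauchy-subsequence step correctly forces $\sigma^{(k_j)}\to 0$ along any subsequence where one index $i$ accumulates infinitely many cuts. (Two small caveats you should at least acknowledge: in (i) you implicitly assume Step~\ref{step:opt-cut} has fired before termination, since otherwise the returned point is $\yt^{(0)}=0$ and $M^{(k-1)}=U$; and in (iii) you invoke $\sigma^{(k)}\to 0$ for a general non-terminating run, which you only established in the ``feasibility cuts forever'' sub-case — for the case where Step~\ref{step:opt-cut} fires infinitely often you need the telescoping bound $M^{(k)}\le M^{(k-1)}-\sigma^{(k)}$ to conclude summability of the $\sigma^{(k)}$ over those iterations.)

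The genuine gap is the closing contradiction in part (ii). Having shown $\sigma^{(k)}\to 0$, you try to contradict this via Berge's theorem and the oracle, claiming $\max_\xi H_i(y^*,\xi)\ge\eta$ and that the closure of the accumulated cuts ``must contain an $\eta$-argmax at $y^*$.'' Neither claim holds. The oracle hypothesis is one-directional: it guarantees a violator is \emph{found} when $\sup_\xi H_i>\eta$, but finding a violator with $H_i(y^{(k_j)},\xi_i^{(k_j)})>0$ tells you nothing beyond $\sup_\xi H_i(y^{(k_j)},\xi)>0$, so in the limit you only get $\max_\xi H_i(y^*,\xi)\ge 0$. Moreover the returned cuts are arbitrary violators, not (near-)maximizers, so their closure need not contain any $\eta$-argmax of $H_i(y^*,\cdot)$; passing to limits along $(y^{(k_j)},\xi_i^{(k_j)})\to(y^*,\xi_i^*)$ merely yields $H_i(y^*,\xi_i^*)=0$, which is perfectly consistent and produces no contradiction. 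The correct (and simpler) contradiction is the one you already built for part (i) but did not deploy here: while only feasibility cuts are added, $M^{(k-1)}$ stays at $U$, and the robustly feasible point $\bar y$ satisfies \emph{every} possible cut with margin $\delta/2$, so $(\bar y,\min\{\delta/(2B),\,U-c^\intercal\bar x\})$ is feasible for every master problem and hence $\sigma^{(k)}$ is bounded below by a positive constant — directly contradicting $\sigma^{(k_j)}\to 0$. Replace the Berge/argmax paragraph with this uniform lower bound and part (ii) closes.
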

\subsection{$F$ Convex in Uncertainty}
\label{sec:convex}

We now consider $F$ to be convex in $\xi$. For this class of functions, unlike the case dealt in the previous section, the supremum present in the definition of the constraint set of \eqref{eq:def_drccp_approx_reform} is nonconvex, as it involves maximizing a difference of convex functions. In this section, we provide a convex inner approximation of~\eqref{eq:def_drccp_approx_reform} which is computable using standard convex optimization tools. We then compare the feasibility set of this convex inner approximation with two other feasibility sets obtained from sample based approaches for CCPs. We consider $\Xi \subseteq \Rb^m$ and the 1-Wasserstein distance in this section, i.e., $p=1$. The results rely on the following assumption.

\begin{assumption}\longthmtitle{Lipschitz in uncertainty}\label{assumption:f_convex}
	For every $x \in X$, the function $\xi \mapsto F(x,\xi)$ is convex. Moreover, there exists a convex function $\map{L_F}{X}{\Rb_{>0}}$, such that $\xi \mapsto F(x,\xi)$ is Lipschitz continuous with constant $L_F(x)$. 
\end{assumption}

Under the above assumption, we derive the following inner approximation of the feasibility set of the CVaR approximation of DRCCP $\Xcdcp$ given by \eqref{eq:def_drccp_approx}. 

\begin{lemma}\longthmtitle{Inner approximation of $\Xcdcp$}\label{lemma:hat_x_dcp_inclusion}
Let Assumptions \ref{ass:main1} and \ref{assumption:f_convex} hold. Define
\begin{equation*}
\Xcdcpin \!:= \!\setdefb{\!x \in \!X \!\!}{\!\theta\!L_F(x)\!+\!\underset{t \in \Rb}{\inf}\!\frac{1}{N}\!\!\sum_{\!i=1}^{\!N} (F(x,\widehat\xi_i)\!+\!t)_{\!+} \!\!- t\alpha\!\leq\!0\!}.
\end{equation*}
Then, $\Xcdcpin \subseteq \Xcdcp$ and these sets are equal when $\Xi = \Rb^m$.
\end{lemma}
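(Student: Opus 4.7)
The plan is to reduce both claims to a sharp analysis of $h(x,t) := \sup_{\Pb \in \MM^\theta_N} \Eb_\Pb[(F(x,\xi)+t)_+]$. By Lemma~\ref{lemma:drccp_minmax}, the constraint defining $\Xcdcp$ is equivalent to $\inf_{t \in \Rb}[h(x,t) - t\alpha] \le 0$, so the lemma follows from bounding $h(x,t)$ against $\theta L_F(x) + \Eb_{\Pbhat_N}[(F(x,\xi)+t)_+]$: an upper bound yields the first inclusion, and tightness (when $\Xi = \Rb^m$) yields equality.

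For the forward inclusion, fix $x \in X$ and $t \in \Rb$. The integrand $\xi \mapsto (F(x,\xi)+t)_+$ is the composition of the $1$-Lipschitz function $(\cdot)_+$ with the $L_F(x)$-Lipschitz function $F(x,\cdot)$, so it is itself $L_F(x)$-Lipschitz. Kantorovich-Rubinstein duality for the $1$-Wasserstein distance then gives $\Eb_\Pb[(F(x,\xi)+t)_+] - \Eb_{\Pbhat_N}[(F(x,\xi)+t)_+] \le L_F(x)\, W_1(\Pb,\Pbhat_N) \le \theta L_F(x)$ for every $\Pb \in \MM^\theta_N$. Taking $\sup$ over $\Pb$, subtracting $t\alpha$, and then taking $\inf_t$ (the additive constant $\theta L_F(x)$ pulls through the infimum) produces exactly the inequality defining $\Xcdcpin$, which gives $\Xcdcpin \subseteq \Xcdcp$.

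For the reverse inclusion when $\Xi = \Rb^m$, the plan is to show the Kantorovich-Rubinstein bound is tight at each $t$. Apply Theorem~\ref{theorem:gao_duality} to $H(\xi) := (F(x,\xi)+t)_+$, yielding $h(x,t) = \inf_{\lambda \ge 0}[\lambda\theta + \frac{1}{N}\sum_i \sup_{\xi \in \Rb^m}\{H(\xi) - \lambda\|\xi - \data_i\|\}]$. For $\lambda \ge L_F(x)$ the $L_F(x)$-Lipschitzness of $H$ forces each inner sup to be attained at $\xi = \data_i$, with value $(F(x,\data_i)+t)_+$. For $\lambda < L_F(x)$, the definition of the Lipschitz constant supplies $\xi_1, \xi_2 \in \Rb^m$ with $(F(x,\xi_2) - F(x,\xi_1))/\|\xi_2-\xi_1\| > \lambda$; by convexity of $F(x,\cdot)$, the secant slopes along the ray $\xi_1 + r(\xi_2-\xi_1)$ are nondecreasing in $r \ge 0$, so $H$ outgrows $\lambda\|\xi - \data_i\|$ linearly in $r$ and the inner sup blows up to $+\infty$. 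The dual infimum is therefore attained at $\lambda = L_F(x)$, so $h(x,t) = \theta L_F(x) + \Eb_{\Pbhat_N}[(F(x,\xi)+t)_+]$; taking $\inf_t$ on both sides after subtracting $t\alpha$ collapses the constraint defining $\Xcdcp$ to that defining $\Xcdcpin$.

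The main obstacle is the convexity-plus-unboundedness argument for $\lambda < L_F(x)$, which relies on the characterization of the Lipschitz constant of a convex function as the supremum of its secant slopes together with the monotonicity of those slopes along any ray. When $\Xi \subsetneq \Rb^m$, the extrapolating ray can be truncated by the boundary, the dual infimum can be achieved at some $\lambda > L_F(x)$, and the upper bound need not be tight; this is exactly why the reverse inclusion is claimed only in the unbounded case.
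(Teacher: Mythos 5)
Your proof of the inclusion $\Xcdcpin \subseteq \Xcdcp$ is correct and is essentially the paper's argument: the paper obtains the bound $\sup_{\Pb \in \MM^\theta_N}\Eb_\Pb[(F(x,\xi)+t)_+] \le \Eb_{\Pbhat_N}[(F(x,\xi)+t)_+] + \theta L_F(x)$ by citing \cite[Theorem 6.3, Proposition 6.5]{peyman2017wasserstein}, which is exactly the Kantorovich--Rubinstein estimate you derive from the $L_F(x)$-Lipschitzness of $\xi \mapsto (F(x,\xi)+t)_+$; whether one first fixes the minimizing $\bar t$ (as the paper does) or passes to $\inf_t$ at the end is immaterial.

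The equality claim is where you diverge and where there is a gap. The paper disposes of it in one sentence (``the inequality becomes an equality when $\Xi=\Rb^m$''), whereas you prove tightness directly via the dual of Theorem~\ref{theorem:gao_duality}. This is more informative, but it exposes a hypothesis you are using silently: that $L_F(x)$ is the \emph{exact} Lipschitz modulus of $F(x,\cdot)$, i.e.\ the supremum of its secant slopes. Assumption~\ref{assumption:f_convex} only asserts that $F(x,\cdot)$ is Lipschitz \emph{with constant} $L_F(x)$, so $L_F(x)$ may strictly overestimate the true modulus $L^*(x)$. In that case, for $\lambda \in (L^*(x), L_F(x))$ there are no points $\xi_1,\xi_2$ with secant slope exceeding $\lambda$, your blow-up argument produces nothing, the dual infimum sits at $L^*(x)$, and $h(x,t)=\theta L^*(x)+\Eb_{\Pbhat_N}[(F(x,\xi)+t)_+]$ is \emph{strictly} below the quantity defining $\Xcdcpin$; the two sets then genuinely differ (take $F$ affine in $\xi$ with gradient of unit norm and $L_F\equiv 2$). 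So your tightness argument is valid only under the extra assumption $L_F(x)=L^*(x)$ --- an assumption the paper also needs for its equality claim but never states, so this is an inherited imprecision rather than a defect unique to your write-up. A further wrinkle you should note: your extrapolating ray drives $F(x,\cdot)\to+\infty$, which is incompatible with Assumption~\ref{ass:main1}(ii) (boundedness of $F(x,\cdot)$ on $\Xi$); on $\Xi=\Rb^m$ a bounded convex function is constant, so the non-degenerate regime of your tightness argument cannot coexist with the lemma's stated hypotheses. If you want the equality direction to stand, state explicitly that $L_F(x)$ is the tight modulus and reconcile this with the boundedness assumption.
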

\begin{proof}
	Suppose $\bar{x} \in \Xcdcpin$. Recall from the proof of Lemma \ref{lemma:drccp_minmax} that for each $i \in \until{N}$, $(F(\bar{x},\data_i)+t)_+ -t\alpha \to \infty$ as $|t| \to \infty$. Therefore, fixing $\bar{x}$, the infimum present in the inequality defining $\Xcdcpin$ is attained. That is, there exists $\bar{t} \in \real$ satisfying
	\begin{equation}\label{eq:tbar-convex-inequality}
	\theta L_F(\bar{x})+ \frac{1}{N} \sum_{i=1}^{N} (F(\bar{x},\data_i)+\bar{t})_+ -\bar{t}\alpha \leq 0. 
	\end{equation}
	Further, $\bar{t}$ should be positive as otherwise the above inequality will not hold. Note that under Assumption \ref{assumption:f_convex}, $\xi\mapsto (F(\bar{x},\xi)+\bar{t})_+ -\bar{t}\alpha$ is convex and Lipschitz continuous with constant $L_F(\bar{x})$. Therefore, we get 
	\begin{align}
	& \underset{t \in \Rb}{\inf} \underset{\Pb \in \MM^\theta_N}{\sup} [\Eb_\Pb[(F(x,\xi)+t)_+] -t\alpha] \notag
	\\
	&\qquad \qquad  \le \underset{\Pb \in \MM^\theta_N}{\sup} [\Eb_\Pb[(F(\bar{x},\xi)+\bar{t})_+] -\bar{t}\alpha] \notag
	\\ 
	& \qquad \qquad \leq \theta L_F(\bar{x})+ \frac{1}{N} \sum_{i=1}^{N} (F(\bar{x},\data_i)+\bar{t})_+ -\bar{t}\alpha, \label{eq:convex-inner-bound}
	\end{align}
	where the last inequality is due to Theorem 6.3 and Proposition 6.5 of~\cite{peyman2017wasserstein}. From~\eqref{eq:tbar-convex-inequality} and~\eqref{eq:convex-inner-bound} we conclude that $\bar{x} \in \Xcdcp$. The equality is due to the fact that when $\Xi = \Rb^m$, the inequality~\eqref{eq:convex-inner-bound} becomes an equality. 
\end{proof}

Observe that above, we upper bound the supremum over the Wasserstein ambiguity set in \eqref{eq:def_drccp_approx} with the sample average and a regularizer term. The proof is a consequence of \cite[Theorem 6.3,Proposition 6.5]{peyman2017wasserstein}. The Lipschitz continuity of $\xi \mapsto F(x,\xi)$ is a sufficient condition for \cite[Theorem 6.3]{peyman2017wasserstein}, and thus, Lemma \ref{lemma:hat_x_dcp_inclusion} may indeed hold for a more general class of functions.

Due to Lemma~\ref{lemma:hat_x_dcp_inclusion}, instead of minimizing the objective over $\Xcdcp$, one could perform the minimization over $\Xcdcpin$. The later problem is easier to deal with and the obtained solution will be feasible with respect to $\Xcdcp$ and hence $\Xdcp$. Consequently, the optimal value will provide an upper bound on the cost of \eqref{eq:def_drccp_approx_reform}.
We now compare the set $\Xcdcpin$ with the feasibility sets of the sample approximation approach \cite{luedtke2008sample}, and the scenario approach \cite{calafiore2005uncertain}. Given $\delta \in [0,1]$ and samples $\setr{\data_i}_{i=1}^N$, the sample approximation feasibility set is
\begin{align}
\Xsa{\delta} & := \setdefb{x \in X}{\frac{1}{N} \sum^N_{i=1} \mathbb{1}_{\{F(x,\widehat{\xi}_i ) \leq 0\}} \geq 1-\delta}.\label{eq:sample_feasible}
\end{align}
Specifically, if $x \in \Xsa{\delta}$, then at most $\delta$ fraction of samples $\setr{\data_i}$ violate the constraint $F(x,\xi) \leq 0$.
Similarly, given $\delta \ge 0$ and samples $\setr{\data_i}_{i=1}^N$, we define
\begin{equation}
\Xscp{\delta} := \setdefb{x \in X}{F(x,\widehat{\xi}_i) + \delta \leq 0, i \in {N}}. \label{eq:scenario_robust_feasible}
\end{equation}
Note that the feasibility set of the scenario program is $\Xscp{0}$. Thus, $\Xscp{\delta}$ defines a ``robust" scenario program, and for any $\delta > 0$, $\Xscp{\delta} \subseteq \Xscp{0}$. Also note that $\Xscp{0} = \Xsa{0}$. The main result of this subsection is stated below.

\begin{proposition}\longthmtitle{Comparison with $\Xcdcpin$}\label{prop:scp_robust_dcp_feasibility}
	Let Assumptions \ref{ass:main1} and \ref{assumption:f_convex} hold. Assume $L_F$ is constant over $X$. Let $t^* := \underset{x \in X, \xi \in \Xi}{\sup} -F(x,\xi)$, $\delta_1 :=  \alpha - \frac{\theta L_F}{t^*}$, and $\delta_2 := \frac{\theta L_F}{\alpha}$. Then, $\Xscp{\delta_2} \subseteq \Xcdcpin \subseteq \Xsa{\delta_1}$.
\end{proposition}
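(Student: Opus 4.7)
The plan is to prove the two inclusions separately by direct manipulation of the infimum defining $\Xcdcpin$. For $\Xscp{\delta_2} \subseteq \Xcdcpin$, I would take $x \in \Xscp{\delta_2}$, i.e., $F(x,\data_i) \leq -\delta_2 = -\theta L_F/\alpha$ for every $i \in [N]$, and simply substitute the explicit test value $t = \delta_2 > 0$ into the infimum. This makes each summand $(F(x,\data_i)+\delta_2)_+$ vanish, and the bracketed expression reduces to $\theta L_F - \delta_2\alpha = 0$; hence the infimum is $\leq 0$ and $x \in \Xcdcpin$.

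For $\Xcdcpin \subseteq \Xsa{\delta_1}$, I would fix $x \in \Xcdcpin$ and set $h(t) := \frac{1}{N}\sum_{i=1}^N (F(x,\data_i)+t)_+ - t\alpha$. Two preliminary facts are needed. First, $h$ is convex, piecewise linear, and coercive, hence attains its infimum at some $\bar t$; repeating the argument from the proof of Lemma~\ref{lemma:hat_x_dcp_inclusion}, the constraint $\theta L_F + h(\bar t) \leq 0$ forces $\bar t > 0$. Second, on $\{t \geq \max_i -F(x,\data_i)\}$ the function $h$ is affine with slope $1-\alpha > 0$; hence one can pick the minimizer so that $\bar t \leq \max_i -F(x,\data_i) \leq t^*$. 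Letting $N_v := \#\{i : F(x,\data_i) > 0\}$ and using $(F(x,\data_i)+\bar t)_+ \geq \bar t$ on violating indices and $\geq 0$ elsewhere,
\begin{align*}
\frac{N_v}{N}\bar t \;\leq\; \frac{1}{N}\sum_{i=1}^N (F(x,\data_i)+\bar t)_+ \;\leq\; \bar t\alpha - \theta L_F,
\end{align*}
where the last inequality restates the constraint defining $\Xcdcpin$ at $t = \bar t$. Dividing by $\bar t > 0$ and using $\bar t \leq t^*$ gives $N_v/N \leq \alpha - \theta L_F/\bar t \leq \alpha - \theta L_F/t^* = \delta_1$, which is equivalent to $\frac{1}{N}\sum_i \mathbb{1}_{\{F(x,\data_i) \leq 0\}} \geq 1 - \delta_1$, i.e., $x \in \Xsa{\delta_1}$.

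The main subtlety is the upper bound $\bar t \leq t^*$ on the minimizer of $h$: without it, the displayed inequality, after division by $\bar t$, would not recover the prescribed violation fraction $\delta_1$ (which is expressed in terms of the sample-independent quantity $t^*$ rather than $\bar t$). This bound is supplied by the monotonicity argument on $h$ sketched above; it is also the place where the definition $t^* = \sup_{x\in X,\xi\in \Xi} -F(x,\xi)$ plays its role.
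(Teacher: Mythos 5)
Your proposal is correct and follows essentially the same route as the paper's proof: the inclusion $\Xscp{\delta_2} \subseteq \Xcdcpin$ by substituting the test value $t=\delta_2$ so the positive parts vanish, and the inclusion $\Xcdcpin \subseteq \Xsa{\delta_1}$ via the monotonicity argument bounding the minimizer $\bar t$ by $t^*$ followed by the counting bound over violating samples. The only cosmetic difference is that the paper obtains a strict inequality $|J^N|/N < \delta_1$ while you obtain a non-strict one; both suffice for membership in $\Xsa{\delta_1}$.
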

\begin{proof}
We first prove $\Xcdcpin \subseteq \Xsa{\delta_1}$. Let $\bar{x} \in \Xcdcpin$ and $J^N := \{i \in [N]|F(\bar{x},\widehat{\xi}_i) > 0\}$, i.e., $J^N$ is the set of indices of samples that violate the constraint $F(\bar{x},\xi) \leq 0$. By the definition of $\Xcdcpin$,
  \begin{align}\label{eq:tstar-ineq}
  L_F\theta+ \underset{t \in \Rb}{\inf} \frac{1}{N} \sum_{i=1}^{N} (F(\bar{x},\data_i)+t)_+ -t\alpha \leq 0
  \end{align} 
  Our first step is to show that $\bar{t}$, the point at which infimum is attained in the above expression, is at most $t^*$. 
  Note that for each $i \in \until{N}$, the function $t \mapsto (F(\bar{x},\data_i)+t)_+ - t \alpha$ is convex, has a unique minimizer at $-F(\bar{x},\data_i)$, and is strictly increasing in the region $t \ge -F(\bar{x},\data_i)$. Therefore, the function $t \mapsto \sum_{i=1}^N (F(\bar{x},\data_i)+t)_+ - t \alpha$ is strictly increasing in the region $t \ge \max_{i \in \until{N}} - F(\bar{x},\data_i)$, which contains $t \ge t^*$. Thus, $\bar{t} \le t^*$. Substituting $\bar{t}$ in~\eqref{eq:tstar-ineq} and removing the infimum gives \begin{align*}
  L_F\theta+ \frac{1}{N} \sum_{i=1}^{N} (F(\bar{x},\widehat\xi_i)+\bar{t})_+ -\bar{t}\alpha \leq 0.
  \end{align*}
  Rearranging the terms and using the definition of $J^N$ yields
  \begin{align*}
  & \bar{t}\alpha - L_F\theta \geq \frac{1}{N} \sum_{i \in J^N} (F(\bar{x},\widehat{\xi}_i) + \bar{t}) > \frac{|J^N| \bar{t}}{N}
  \\ 
  & \implies  \frac{|J^N|}{N} < \alpha - \frac{L_F \theta}{\bar{t}} \leq \alpha - \frac{L_F \theta}{t^*} 
  \\ 
  & \implies  {\frac{1}{N} \sum^N_{i=1} \mathbb{1}_{\{F(\bar{x},\widehat{\xi}_i ) > 0\}} < \alpha - \frac{L_F \theta}{t^*}}.
  \end{align*}
  The first implication uses the bound $\bar{t} \le t^*$ and the second uses the definition of $J^N$. This concludes the proof.

Now, let $\bar{x} \in \Xscp{\delta_2}$. By definition, $F(\bar{x},\widehat{\xi}_i) + \delta_2 \leq 0$ for all $i \in [N]$. Using this fact, we get  
	\begin{align*}
		&L_F\theta+ \underset{t \in \Rb}{\inf} \frac{1}{N} \sum_{i=1}^{N} (F(x,\widehat\xi_i)+t)_+ -t\alpha
		\\
		& \qquad \le  L_F\theta+ \frac{1}{N} \sum_{i=1}^{N} \left(F(\bar{x},\widehat\xi_i)+\frac{\theta L_F}{\alpha}\right)_+ - L_F \theta = 0. 
	\end{align*}
	The inequality holds as we have picked $t = \delta_2$ and removed the infimum operator. 
	Thus, we conclude that $\bar{x} \in \Xcdcpin$. 
\end{proof}

The above result shows that the feasibility set of the robust scenario program \eqref{eq:scenario_robust_feasible} is contained in the set $\Xcdcpin$. Furthermore, by the definition of the sample approximation set~\eqref{eq:sample_feasible}, the above result implies that if $\bar{x} \in \Xcdcpin$, then at most $\delta_1 < \alpha$ fraction of samples violate the constraint $F(x,\xi) \leq 0$. Both $\delta_1$ and $\delta_2$ depend on the Lipschitz constant, the probability of constraint violation $\alpha$, the Wasserstein radius, and $\delta_1$ depends additionally on $t^*$.

Independent of our work, \cite{xie2018drccp} showed the above relationships between the feasibility sets $\widehat{X}^{\mathtt{in}}_{\mathtt{CDCP}}$, $\widehat{X}_{\mathtt{SA},\alpha}$ and $\widehat{X}_{\mathtt{SCP},\delta}$ when the constraint function is affine in $x$ and $\xi$. We show that the above comparison holds more generally when the constraint function is convex in both $x$ and $\xi$.

We now present the following comparison between different feasibility sets studied in this paper. For $\delta_1 = \alpha - \frac{\theta L_F}{t^*}$ and $\delta_2 =  \frac{\theta L_F}{\alpha}$, we have

{\small 
	\begin{empheq}[box=\fbox]{equation}
\def\arraystretch{1}
\begin{array}{c c c c c}
\Xscp{0} & & = & & \Xsa{0} \\
\rotatebox[origin=c]{90}{$\subseteq$} & & & & \rotatebox[origin=c]{270}{$\subseteq$}\\
\widehat{X}_{\mathtt{SCP},\delta_2} & \subseteq & \widehat{X}^{\mathtt{in}}_{\mathtt{CDCP}} & \subseteq & \widehat{X}_{\mathtt{SA},\delta_1}\\
 & & \rotatebox[origin=c]{270}{$\subseteq$} & &\\
 & & \widehat{X}_{\mathtt{CDCP}} & \subseteq & \widehat{X}_{\mathtt{DCP}}.
\end{array} \notag
\end{empheq}
}

Note that $\Xsa{\delta_1}$ and $\Xscp{0}$ are in general incomparable with $\Xcdcp$. Thus, the objective values obtained by optimizing over these sets are not necessarily upper or lower bounds on the optimal solution of \eqref{eq:def_drccp_approx_reform}.

We conclude this section with the following {\it ex-post} comparison of the feasibility sets $\Xscp{0}$ and $\Xcdcpin$. 

\begin{proposition}\longthmtitle{Ex-post comparison of $\Xcdcpin$ and $\Xscp{0}$}\label{prop:scenario_dcp_feasibility}
	Let Assumptions \ref{ass:main1} and \ref{assumption:f_convex} hold. Assume $L_F$ is constant over $X$. Let $x \in \Xscp{0}$. Define $J_x := \{i \in [N]|F(x,\widehat{\xi}_i) = 0\}$ and $\gamma_x := \underset{i \in [N] \setminus J_x}{\min} (-F(x,\widehat{\xi}_i))$. If $\theta \leq \frac{\gamma_x}{L_F}(\alpha-\frac{|J_x|}{N})$, then $x \in \Xcdcpin$.
\end{proposition}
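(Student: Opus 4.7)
The plan is to exhibit an explicit value of $t$ that certifies membership of $x$ in $\Xcdcpin$. Since $\Xcdcpin$ is defined by the condition
$\theta L_F + \inf_{t \in \Rb} \frac{1}{N} \sum_{i=1}^N (F(x,\widehat{\xi}_i)+t)_+ - t\alpha \le 0$,
it suffices to show that with the choice $\bar t = \gamma_x$ the inequality inside the infimum is satisfied.

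First, I would evaluate the positive parts at $\bar t = \gamma_x$. For $i \in J_x$, we have $F(x,\widehat{\xi}_i) = 0$ and hence $(F(x,\widehat{\xi}_i) + \bar t)_+ = \gamma_x$. For $i \in [N] \setminus J_x$, the definition of $\gamma_x$ gives $-F(x,\widehat{\xi}_i) \ge \gamma_x$, i.e., $F(x,\widehat{\xi}_i) + \bar t \le 0$, so $(F(x,\widehat{\xi}_i) + \bar t)_+ = 0$. Note that this is where feasibility $x \in \Xscp{0}$ (which guarantees $F(x,\widehat{\xi}_i) \le 0$ for all $i$ and hence the clean split into $J_x$ and its complement) is used.

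Summing the two cases yields $\frac{1}{N} \sum_{i=1}^N (F(x,\widehat{\xi}_i) + \bar t)_+ = \frac{|J_x|}{N}\gamma_x$. Substituting into the defining inequality for $\Xcdcpin$ reduces the required condition to $\theta L_F + \frac{|J_x|}{N}\gamma_x - \gamma_x \alpha \le 0$, which rearranges to exactly the hypothesis $\theta \le \frac{\gamma_x}{L_F}(\alpha - \frac{|J_x|}{N})$. Since the infimum over $t \in \Rb$ is bounded above by the value at $\bar t = \gamma_x$, this establishes $x \in \Xcdcpin$.

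There is no real obstacle: the whole argument is a direct substitution, and the hypothesis on $\theta$ has evidently been reverse-engineered from this particular choice of $\bar t$. The only thing to be mindful of is that the hypothesis implicitly requires $\alpha > |J_x|/N$ for the bound on $\theta$ to be meaningful (otherwise $\theta$ would have to be non-positive), which is automatically fine since $\theta \ge 0$ by construction and the case $|J_x|/N \ge \alpha$ simply makes the hypothesis vacuous unless $\theta = 0$, in which case $\Xscp{0} \subseteq \Xcdcpin$ is immediate from Lemma~\ref{lemma:hat_x_dcp_inclusion}-style reasoning.
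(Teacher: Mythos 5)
Your proposal is correct and follows essentially the same argument as the paper: substitute $t = \gamma_x$, observe that the positive part equals $\gamma_x$ on $J_x$ and vanishes on its complement, and rearrange to recover the stated bound on $\theta$. The additional remark about the hypothesis being vacuous when $|J_x|/N \ge \alpha$ is a fine observation but not needed for the proof.
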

\begin{proof}
	Let $t = \gamma_x$. Then, for $i \in [N] \setminus J_x$, $F(x,\data_i) + \gamma \leq 0$. Therefore,
	\begin{align*}
	& L_F\theta+ \frac{1}{N} \sum_{i=1}^{N} (F(\bar{x},\data_i)+\gamma_x)_+ -\gamma_x \alpha 
	\\ & \quad = L_F\theta - \gamma_x \alpha + \frac{1}{N} \sum_{i \in J_x} \gamma_x = L_F\theta - \gamma_x \alpha + \frac{|J_x|}{N}\gamma_x \leq 0.
	\end{align*}
	Thus, we deduce that $x \in \widehat{X}^{\mathtt{in}}_{\mathtt{CDCP}}$.
\end{proof}

As a consequence of the above result, for a given optimal solution $\xo \in X$ of the scenario program, if $\frac{\gamma_{\xo}}{L_F}(\alpha-\frac{|J_{\xo}|}{N}) > 0$, we can choose the radius of the Wasserstein ambiguity set $\theta$ to be sufficiently small such that the optimal solution of the DRCCP with the feasibility set $\widehat{X}^{\mathtt{in}}_{\mathtt{CDCP}}$ has a smaller value compared to the scenario program. 

\section{Conclusion}\label{sec:conclusions}

We studied distributionally robust chance constrained optimization under Wasserstein ambiguity sets defined as the set of all distributions that are close to the empirical distribution. We presented a convex reformulation of the program when the original chance constraint is replaced by its convex CVaR counterpart. We then showed the tractability of this convex reformulation for affine constraint functions. Furthermore, for constraint functions concave in the uncertainty, we presented a cutting-surface algorithm that converges to an approximately optimal solution of the CVaR approximation of the DRCCP. Finally, for constraint functions convex in the uncertainty, we compared the feasibility sets of DRCCP and its approximations with those of the scenario and sample approximation approaches. 

In future, we plan to build upon our results to design distributionally robust controllers for stochastic systems. In addition, we wish explore online optimization approaches for DRCCPs, and investigate their relevance for stochastic model predictive control problems. A rigorous comparison of DRCCPs and the scenario approach vis-a-vis finite sample guarantees and asymptotic convergence of optimal solutions also remain as challenging open problems.

\section*{Acknowledgement}
We thank Prof. Shabbir Ahmed and Prof. Weijun Xie for pointing out an error in an earlier version of this work as well as pointers to certain relevant papers. We thank Prof. Peyman Mohajerin Esfahani for helpful discussions. 

\bibliographystyle{IEEEtran}
\bibliography{bibfile}

\begin{thebibliography}{10}
\providecommand{\url}[1]{#1}
\csname url@rmstyle\endcsname
\providecommand{\newblock}{\relax}
\providecommand{\bibinfo}[2]{#2}
\providecommand\BIBentrySTDinterwordspacing{\spaceskip=0pt\relax}
\providecommand\BIBentryALTinterwordstretchfactor{4}
\providecommand\BIBentryALTinterwordspacing{\spaceskip=\fontdimen2\font plus
\BIBentryALTinterwordstretchfactor\fontdimen3\font minus
  \fontdimen4\font\relax}
\providecommand\BIBforeignlanguage[2]{{%
\expandafter\ifx\csname l@#1\endcsname\relax
\typeout{** WARNING: IEEEtran.bst: No hyphenation pattern has been}%
\typeout{** loaded for the language `#1'. Using the pattern for}%
\typeout{** the default language instead.}%
\else
\language=\csname l@#1\endcsname
\fi
#2}}

\bibitem{ben2009robust}
A.~Ben-Tal, L.~El~Ghaoui, and A.~Nemirovski, \emph{Robust optimization}.\hskip
  1em plus 0.5em minus 0.4em\relax Princeton University Press, 2009, vol.~28.

\bibitem{shapiro2009lectures}
A.~Shapiro, D.~Dentcheva, and A.~Ruszczy{\'n}ski, \emph{Lectures on stochastic
  programming: {M}odeling and theory}.\hskip 1em plus 0.5em minus 0.4em\relax
  SIAM, 2009.

\bibitem{farina2016stochastic}
M.~Farina, L.~Giulioni, and R.~Scattolini, ``Stochastic linear model predictive
  control with chance constraints--a review,'' \emph{Journal of Process
  Control}, vol.~44, pp. 53--67, 2016.

\bibitem{schildbach2014scenario}
G.~Schildbach, L.~Fagiano, C.~Frei, and M.~Morari, ``The scenario approach for
  stochastic model predictive control with bounds on closed-loop constraint
  violations,'' \emph{Automatica}, vol.~50, no.~12, pp. 3009--3018, 2014.

\bibitem{blackmore2011chance}
L.~Blackmore, M.~Ono, and B.~C. Williams, ``Chance-constrained optimal path
  planning with obstacles,'' \emph{IEEE Transactions on Robotics}, vol.~27,
  no.~6, pp. 1080--1094, 2011.

\bibitem{vitus2016stochastic}
M.~P. Vitus, Z.~Zhou, and C.~J. Tomlin, ``Stochastic control with uncertain
  parameters via chance constrained control,'' \emph{IEEE Transactions on
  Automatic Control}, vol.~61, no.~10, pp. 2892--2905, 2016.

\bibitem{vrakopoulou2017chance}
M.~Vrakopoulou, B.~Li, and J.~L. Mathieu, ``Chance constrained reserve
  scheduling using uncertain controllable loads part {I}: Formulation and
  scenario-based analysis,'' \emph{IEEE Transactions on Smart Grid (To
  appear)}, 2017.

\bibitem{guo2018data}
Y.~Guo, K.~Baker, E.~Dall'Anese, Z.~Hu, and T.~H. Summers, ``Data-based
  distributionally robust stochastic optimal power flow, part {I}:
  Methodologies,'' \emph{arXiv preprint arXiv:1804.06388}, 2018.

\bibitem{carvalho2015automated}
A.~Carvalho, S.~Lef{\'e}vre, G.~Schildbach, J.~Kong, and F.~Borrelli,
  ``Automated driving: The role of forecasts and uncertainty - {A} control
  perspective,'' \emph{European Journal of Control}, vol.~24, pp. 14--32, 2015.

\bibitem{calafiore2005uncertain}
G.~C. Calafiore and M.~C. Campi, ``Uncertain convex programs: {R}andomized
  solutions and confidence levels,'' \emph{Mathematical Programming}, vol. 102,
  no.~1, pp. 25--46, 2005.

\bibitem{campi2008betterbound}
M.~C. Campi and S.~Garatti, ``The exact feasibility of randomized solutions of
  uncertain convex programs,'' \emph{SIAM Journal on Optimization}, vol.~19,
  no.~3, pp. 1211--1230, 2008.

\bibitem{calafiore2010random}
G.~C. Calafiore, ``Random convex programs,'' \emph{SIAM Journal on
  Optimization}, vol.~20, no.~6, pp. 3427--3464, 2010.

\bibitem{luedtke2008sample}
J.~Luedtke and S.~Ahmed, ``A sample approximation approach for optimization
  with probabilistic constraints,'' \emph{SIAM Journal on Optimization},
  vol.~19, no.~2, pp. 674--699, 2008.

\bibitem{campi2011samplediscard}
M.~C. Campi and S.~Garatti, ``A sampling-and-discarding approach to
  chance-constrained optimization: feasibility and optimality,'' \emph{Journal
  of Optimization Theory and Applications}, vol. 148, pp. 257--280, 2011.

\bibitem{delage2010distributionally}
E.~Delage and Y.~Ye, ``Distributionally robust optimization under moment
  uncertainty with application to data-driven problems,'' \emph{Operations
  research}, vol.~58, no.~3, pp. 595--612, 2010.

\bibitem{zymler2013joint}
S.~Zymler, D.~Kuhn, and B.~Rustem, ``Distributionally robust joint chance
  constraints with second-order moment information,'' \emph{Mathematical
  Programming}, vol. 137, pp. 167--198, 2013.

\bibitem{hanasusanto2017ambiguous}
G.~A. Hanasusanto, V.~Roitch, D.~Kuhn, and W.~Wiesemann, ``Ambiguous joint
  chance constraints under mean and dispersion information,'' \emph{Operations
  Research}, vol.~65, no.~3, pp. 751--767, 2017.

\bibitem{erdougan2006ambiguous}
E.~Erdo{\u{g}}an and G.~Iyengar, ``Ambiguous chance constrained problems and
  robust optimization,'' \emph{Mathematical Programming}, vol. 107, no. 1-2,
  pp. 37--61, 2006.

\bibitem{jiang2016data-driven}
R.~Jiang and Y.~Guan, ``Data-driven chance constrained stochastic program,''
  \emph{Mathematical Programming}, vol. 158, pp. 291--327, 2016.

\bibitem{van2016distributionally}
B.~P. Van~Parys, D.~Kuhn, P.~J. Goulart, and M.~Morari, ``Distributionally
  robust control of constrained stochastic systems,'' \emph{IEEE Transactions
  on Automatic Control}, vol.~61, no.~2, pp. 430--442, 2016.

\bibitem{zhang2017distributionally}
Y.~Zhang, S.~Shen, and J.~L. Mathieu, ``Distributionally robust
  chance-constrained optimal power flow with uncertain renewables and uncertain
  reserves provided by loads,'' \emph{IEEE Transactions on Power Systems},
  vol.~32, no.~2, pp. 1378--1388, 2017.

\bibitem{villani2003topics}
C.~Villani, \emph{Topics in optimal transportation}.\hskip 1em plus 0.5em minus
  0.4em\relax American Mathematical Soc., 2003, no.~58.

\bibitem{gao2016wasserstein}
R.~Gao and A.~J. Kleywegt, ``Distributionally robust stochastic optimization
  with {W}asserstein distance,'' 2016, available online at
  \url{https://arxiv.org/abs/1604.02199}.

\bibitem{peyman2017wasserstein}
P.~M. Esfahani and D.~Kuhn, ``Data-driven distributionally robust optimization
  using the {W}asserstein metric: Performance guarantees and tractable
  reformulations,'' \emph{Mathematical Programming. Ser. A}, 2017, to appear.
  Available at \url{https://arxiv.org/abs/1505.05116}.

\bibitem{yang2017convex}
I.~Yang, ``A convex optimization approach to distributionally robust markov
  decision processes with {W}asserstein distance,'' \emph{IEEE control systems
  letters}, vol.~1, no.~1, pp. 164--169, 2017.

\bibitem{ahmed2018bicriteria}
W.~Xie and S.~Ahmed, ``Bicriteria approximation of chance constrained covering
  problems,'' 2018, available online at
  \url{http://www.optimization-online.org/DB_HTML/2018/01/6411.html}.

\bibitem{xie2018drccp}
W.~Xie, ``On distributionally robust chance constrained program with
  {W}asserstein distance,'' 2018, available online at
  \url{http://www.optimization-online.org/DB_FILE/2018/06/6662.pdf}.

\bibitem{kuhn2018drccp}
Z.~Chen, D.~Kuhn, and W.~Wiesemann, ``Data-driven chance constrained programs
  over {W}asserstein balls,'' 2018, available online at
  \url{http://www.optimization-online.org/DB_FILE/2018/06/6671.pdf}.

\bibitem{prekopa1970probabilistic}
A.~Prekopa, ``On probabilistic constrained programming,'' in \emph{Proceedings
  of the Princeton symposium on mathematical programming}, vol. 113, 1970, p.
  138.

\bibitem{nemirovski2006convex}
A.~Nemirovski and A.~Shapiro, ``Convex approximations of chance constrained
  programs,'' \emph{SIAM Journal on Optimization}, vol.~17, no.~4, pp.
  969--996, 2006.

\bibitem{mehrotra2014cutting}
S.~Mehrotra and D.~Papp, ``A cutting surface algorithm for semi-infinite convex
  programming with an application to moment robust optimization,'' \emph{SIAM
  Journal on Optimization}, vol.~24, no.~4, pp. 1670--1697, 2014.

\bibitem{luo2017decomposition}
F.~Luo and S.~Mehrotra, ``Decomposition algorithm for distributionally robust
  optimization using wasserstein metric,'' 2017, arXiv preprint
  arXiv:1704.03920.

\bibitem{rockafellar2000optimization}
R.~T. Rockafellar and S.~Uryasev, ``Optimization of conditional
  value-at-risk,'' \emph{Journal of risk}, vol.~2, pp. 21--42, 2000.

\bibitem{shapiro2002minimax}
A.~Shapiro and A.~Kleywegt, ``Minimax analysis of stochastic problems,''
  \emph{Optimization Methods and Software}, vol.~17, no.~3, pp. 523--542, 2002.

\bibitem{pichler2017quantitative}
A.~Pichler and H.~Xu, ``Quantitative stability analysis for minimax
  distributionally robust risk optimization,'' 2017, available online at
  \url{http://www.optimization-online.org/DB_HTML/2017/01/5814.html}.

\bibitem{rockafellar1970convex-analysis}
R.~T. Rockafellar, \emph{Convex Analysis}.\hskip 1em plus 0.5em minus
  0.4em\relax Princeton, NJ: Princeton University Press, 1970.

\end{thebibliography}
\end{document}